\author{Matteo Tamiozzo}
\title{Congruences of modular forms and modularity of Tate--Shafarevich classes}
\newtheorem{thm}[subsubsection]{Theorem}     
\theoremstyle{plain}                    
\newtheorem{conj}[subsubsection]{Conjecture} 
\newtheorem{prop}[subsubsection]{Proposition}    
\newtheorem{corol}[subsubsection]{Corollary}     
\newtheorem{lem}[subsubsection]{Lemma}         
\theoremstyle{definition}               
\theoremstyle{remark}                   
\newtheorem{rem}[subsubsection]{Remark}      
\newcommand{\C}{\mathbf{C}}     
\newcommand{\Q}{\mathbf{Q}}      
\newcommand{\Z}{\mathbf{Z}}    
\DeclareFontFamily{U}{wncy}{}
    \DeclareFontShape{U}{wncy}{m}{n}{<->wncyr10}{}
    \DeclareSymbolFont{mcy}{U}{wncy}{m}{n}
    \DeclareMathSymbol{\Sh}{\mathord}{mcy}{"58}
\numberwithin{equation}{subsection}
\begin{document}

\begin{abstract}
We prove, under suitable assumptions, that $p$-torsion Tate--Shafarevich classes for elliptic curves over the rationals are visible in quotients of Jacobians of modular curves, as predicted by a conjecture of Jetchev--Stein. The key ingredient is the non-triviality of the Bertolini--Darmon bipartite Kolyvagin system, which implies that suitable cohomology classes of the system form a basis of the Selmer group modulo $p$.
\end{abstract}

\maketitle

\tableofcontents

\section{Introduction}

Let $E$ be an elliptic curve over a number field $K$, with $L$-function $L(E/K, s)$; we denote by $n$ the rank of $E(K)$, and by $|\Sh(E/K)|$ the cardinality of the Tate--Shafarevich group $\Sh(E/K)$. The Birch and Swinnerton-Dyer conjecture predicts that $\mathrm{ord}_{s=1}L(E/K, s)=n$, and
\begin{equation}\label{svf}
\frac{L^{(n)}(E/K, 1)}{n!P(E/K)R(E/K)}=|\Sh(E/K)|,
\end{equation}
where the regulator $R(E/K)$ and the period $P(E/K)$ are defined in \cite[Definitions 1.6, 2.9]{gro11}.

\subsection{Bertolini--Darmon's bipartite Kolyvagin system} Assume in the rest of this introduction that the elliptic curve $E$ is defined over $\Q$, so that $L(E/\Q, s)=L(f, s)$ for a newform $f \in S_2(\Gamma_0(N))$, and let $K/\Q$ be an imaginary quadratic field. In the seminal work \cite{bd05}, Bertolini--Darmon introduced a strategy to study the Birch and Swinnerton-Dyer conjecture for $E/K$ based on congruences of modular forms.
\subsubsection{} Let us first recall the basic idea to show that $E(K)$ is finite if $L(E/K, 1)\neq 0$; more details on the argument and the required assumptions can be found in the body of the text, and in \cite{bd05}, \cite{nek12}. For a fixed prime $p>3$, we consider newforms $f_\ell\in S_2(\Gamma_0(N\ell))$ congruent to $f$ modulo (a prime above) $p$, for suitable auxiliary primes $\ell$ inert in $K$, called admissible primes (cf. \cref{bip-set}). Heegner points on the abelian varieties $A_\ell$ attached to the modular forms $f_\ell$ yield cohomology classes $z_\ell^{BD} \in H^1(K, E[p])$ which are possibly ramified at $\ell$, as $A_\ell$ has bad reduction at $\ell$. More precisely, assuming that the left-hand side of \eqref{svf} for $n=0$ is a $p$-adic unit, one shows via the so-called first reciprocity law that $z_\ell^{BD}$ is not a Selmer class, as it fails to satisfy the relevant local condition at $\ell$. Varying $\ell$, this allows to construct enough classes in $H^1(K, E[p])\smallsetminus \mathrm{Sel}(K, E[p])$ to force, by global duality, the vanishing of $\mathrm{Sel}(K, E[p])$. In particular, $E(K)$ is finite.

\subsubsection{} More generally, Bertolini--Darmon construct classes $z_\mathfrak{l}^{BD}\in H^1(K, E[p])$, indexed by suitable products of admissible primes, which are part of a bipartite Kolyvagin system $Z^{BD}$ for $E[p]$. The reader can consult \cref{bipkol}, \cref{constr-bd} for the definitions, and Remark \ref{termin} if surprised by our terminology. Using such classes - and working modulo higher powers of $p$ - one can prove the following implication, under suitable assumptions:
\begin{equation*}
\mathrm{ord}_{s=1}L(E/K, s)\leq 1 \Rightarrow n=\mathrm{ord}_{s=1}L(E/K, s), \text{ and } v_p\left(\frac{L^{(n)}(E/K, 1)}{n!P(E/K)R(E/K)}\right)\geq v_p(|\Sh(E/K)[p^\infty]|).
\end{equation*}
We refer the reader to \cite{bbv16}, \cite{blv}, as well as to \cite[\S 1]{tam21} for a similar result for Hilbert modular forms of parallel weight two and references to further related works. In addition, as highlighted in \cite{tam21}, in order to conclude that the $p$-adic valuations of the two sides of \eqref{svf} are equal one needs to know that
\begin{equation}\label{rib2}
L^{alg}(E/K, 1) \equiv 0 \pmod p \Rightarrow \mathrm{Sel}(K, E[p]) \neq 0.
\end{equation}
More precisely, one needs an analogue of this implication for modular forms obtained raising the level of $f$ modulo $p$, cf. \cite[Theorem 1.5, Remark 6.11]{tam21} and \cref{zbdntriv}.
\subsection{A basis of $\mathrm{Sel}(K, E[p])$} This note originated from our attempt to study the implication \eqref{rib2}; our starting point is the simple observation that the first reciprocity law implies that 
each $z_{\ell}^{BD}$ is a Selmer class if $L^{alg}(E/K, 1)$ is not a $p$-adic unit. Under certain assumptions implying that $Z^{BD}$ is non-trivial (i.e. contains at least one non-zero element) we prove in Theorem \ref{bd-gensel} that suitable classes $z_{\mathfrak{l}}^{BD}$ form a basis of $\mathrm{Sel}(K, E[p])$. A similar problem for Kolyvagin's system of Heegner points was raised by Gross \cite[\S 11]{gro91} and studied by Zhang W. \cite[\S 8.2]{zha14}.

Assume that $L^{alg}(E/K, 1) \equiv 0 \pmod p$. If $Z^{BD}$ is non-trivial, then \eqref{rib2} holds (cf. Proposition \ref{nontriv-conv} and \cref{zbdntriv}) and, by Proposition \ref{gen-sel-abs}, one of the classes $z_\mathfrak{l}^{BD}$ belongs to $\mathrm{Sel}(K, E[p]) \smallsetminus \{0\}$. However, let us emphasize that, at present, the only way we know to prove the non-triviality of $Z^{BD}$ consists in deducing it from a generalization of \eqref{rib2}, cf. \cref{zbdntriv}.

\subsection{Modularity of Tate--Shafarevich classes} We apply Theorem \ref{bd-gensel} to the study of visibility of Tate--Shafarevich classes; some background on the subject can be found in \cref{back-vis}. Recall that $c \in H^1(K, E)$ is visible in an abelian variety $A$ in which $E$ is embedded if the image of $c$ in $H^1(K, A)$ is trivial. Jetchev--Stein \cite[Conjecture 7.1.1]{js07} conjectured that every class $c \in \Sh(E/\Q)$ is visible in a quotient of the Jacobian of a modular curve $X_0(M)$; we call a class $c$ with this property \emph{modular}. Concretely, this tells us that $c$ can be realized as an $E$-torsor contained in a quotient of $\mathrm{Jac}(X_0(M))$.

As mentioned above, each class $z_\ell^{BD}$ comes from a $K$-point on a modular abelian variety $A_\ell$ with $E[p]\subset A_\ell$; it follows that the image of $z_\ell^{BD}$ in $H^1(K, E)$ is visible in the quotient $(E \times A_\ell)/E[p]$ (where $E[p]$ is embedded anti-diagonally; cf. \cref{mainproof}). Generalizing this observation, we prove new cases of the conjecture of Jetchev--Stein (previously known only for classes of order at most 3). A special case of our main theorem is the following; see Theorem \ref{mainthm} and Corollary \ref{cor-jsconj} for more general results.

\begin{thm}
Let $E/\Q$ be an elliptic curve of squarefree conductor $N$, without complex multiplication. Let $p>3$ be a prime of good ordinary reduction. Assume that $\bar{\rho}: \mathrm{Gal}(\bar{\Q}/\Q)\rightarrow \mathrm{Aut}_{\mathbf{F}_p}(E[p])$ is surjective and ramified at every prime $q \mid N$ such that $q \equiv \pm 1 \pmod p$. Then every class in $\Sh(E/\Q)[p]$ is modular.
\end{thm}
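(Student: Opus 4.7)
The plan is to pass to an auxiliary imaginary quadratic field $K$, express any class in $\Sh(E/\Q)[p]$ (after restriction to $K$) as an $\mathbf{F}_p$-linear combination of Bertolini--Darmon classes, each of which is visible in a modular quotient by design, and then descend the visibility from $K$ back to $\Q$. First I would choose $K$ imaginary quadratic, satisfying the Heegner hypothesis for $N$ (all primes dividing $N$ are split in $K$) and the hypotheses of Theorem \ref{bd-gensel}; a natural candidate is one for which $L^{\mathrm{alg}}(E/K, 1) \equiv 0 \pmod p$, which together with $\Sh(E/\Q)[p] \neq 0$ should supply the non-triviality of $Z^{BD}$. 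For $c \in \Sh(E/\Q)[p]$, the restriction $H^1(\Q, E)[p] \to H^1(K, E)[p]$ is injective, because its kernel is the $p$-torsion of $H^1(\mathrm{Gal}(K/\Q), E(K))$, which is annihilated by $2$ and hence vanishes as $p > 2$. Thus $c_K := \mathrm{res}_K(c) \in \Sh(E/K)[p]$ is nonzero; by the Kummer exact sequence it lifts to some $\tilde c_K \in \mathrm{Sel}(K, E[p])$. Theorem \ref{bd-gensel} then furnishes admissible products $\mathfrak{l}_1, \ldots, \mathfrak{l}_n$ such that $\{z_{\mathfrak{l}_i}^{BD}\}$ is a basis of $\mathrm{Sel}(K, E[p])$, so we can write $\tilde c_K = \sum_i a_i\, z_{\mathfrak{l}_i}^{BD}$.

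The next step is to bundle the individual visibility statements into a single modular abelian variety. Each $\bar z_{\mathfrak{l}_i}^{BD}$ vanishes in $H^1(K, B_i)$ for $B_i := (E \times A_{\mathfrak{l}_i})/E[p]^{\mathrm{antidiag}}$, and I would set
\[ B := \Bigl(E \times \prod_{i=1}^n A_{\mathfrak{l}_i}\Bigr)\Big/H, \]
where $H \cong E[p]^n$ is the subgroup generated by the antidiagonal copies of $E[p]$ pairing the first factor with each $A_{\mathfrak{l}_i}$ in turn. A short check shows that $E$ embeds in $B$ via the first factor, each $B_i$ embeds in $B$ compatibly with the $E$-inclusion, and $B$ is a quotient of $\mathrm{Jac}(X_0(M))$ for $M = N \prod_i \prod_{\ell \mid \mathfrak{l}_i} \ell$. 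By linearity, $c_K$ then maps to $0$ in $H^1(K, B)$.

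Finally, the kernel of $H^1(\Q, B) \to H^1(K, B)$ equals $H^1(\mathrm{Gal}(K/\Q), B(K))$, which is annihilated by $2$; since the image of $c$ in $H^1(\Q, B)$ is simultaneously $p$- and $2$-torsion with $p > 2$, it must vanish, and $c$ is visible in the modular abelian variety $B$ over $\Q$. The main obstacle I foresee lies in the first step: producing an imaginary quadratic field $K$ that satisfies simultaneously the Heegner hypothesis for $N$ and the hypotheses of Theorem \ref{bd-gensel}, since the non-triviality of $Z^{BD}$ currently requires input from variants of implication \eqref{rib2}. The surjectivity of $\bar\rho$ and its ramification at primes $q \equiv \pm 1 \pmod p$ dividing $N$ should enter precisely at this stage, guaranteeing the existence of admissible level-raising primes with the correct local behaviour, and hence the applicability of the basis statement for a suitable choice of $K$.
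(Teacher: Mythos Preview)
Your overall strategy matches the paper's proof of Corollary~\ref{cor-jsconj} and Theorem~\ref{mainthm}: restrict to an auxiliary imaginary quadratic field $K$, invoke Theorem~\ref{bd-gensel} to express Selmer classes in terms of the Bertolini--Darmon basis, bundle the abelian varieties $A_{\mathfrak{l}^{(i)}}$ into a single quotient of $J_0(N\mathfrak{l})$, and descend using that the kernel of restriction is killed by~$2$. The construction of $B$ and the descent step are essentially those of the paper.

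The gap is exactly the one you flag, and it has a simple resolution you are missing: \emph{do not impose the Heegner hypothesis}. If you require every $q \mid N$ to split in $K$, then $N^+ = N$, and hypothesis~(3) of Theorem~\ref{bd-gensel} forces $\bar{\rho}$ to be ramified at \emph{every} $q \mid N$ --- strictly stronger than what the theorem you are proving assumes. The paper instead chooses $K$ (of discriminant coprime to $Np$) so that $q \mid N$ is split in $K$ if and only if $\bar{\rho}$ is ramified at $q$. With this choice, condition~(3) of Theorem~\ref{bd-gensel} holds by construction (every $q \mid N^+$ is one where $\bar{\rho}$ ramifies), and condition~(2) is vacuous: if $q \mid N^-$ then $\bar{\rho}$ is unramified at $q$, hence by the stated hypothesis $q \not\equiv \pm 1 \pmod p$. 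The good ordinary assumption then supplies the non-triviality of $Z^{BD}$ via \cref{zbdntriv} (Skinner--Urban), with no further constraint on $K$; your suggestion that one should arrange $L^{\mathrm{alg}}(E/K,1) \equiv 0 \pmod p$ to obtain non-triviality is a red herring --- non-triviality is an input coming from the Iwasawa main conjecture, and if that $L$-value were a $p$-adic unit one would simply have $\mathrm{Sel}(K, E[p]) = 0$ and nothing to prove.
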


\subsubsection{} Note that there is no restriction on the rank of $E(\Q)$ in the theorem. The hypotheses are imposed to ensure on the one hand that the bipartite Kolyvagin system $Z^{BD}$ can be constructed, and on the other hand that it is non-trivial. For the reader's convenience, we recall that the most important ingredients to achieve these goals are the following (cf. also the introduction of \cite{tam21}).
\begin{enumerate}
\item Results on the cohomology with $\Z/p\Z$-coefficients of Shimura curves and Shimura sets (multiplicity one, Ihara's lemma) used to construct $Z^{BD}$.
\item The Iwasawa main conjecture - especially Skinner--Urban's divisibility - as well as a comparison of periods, employed to prove \eqref{modp-conv} (generalizing \eqref{rib2}) and to deduce the non-triviality of $Z^{BD}$.
\end{enumerate}
We have not attempted to make our assumptions optimal; still, they are easy to verify in practice, and they do hold true in a non-trivial example, illustrated in \cref{concrete}.

\subsection{Structure of the text} In \cref{sec-gensel} we study bipartite Kolyvagin systems for $E[p]$ and prove Theorem \ref{bd-gensel}. We formulate our arguments abstractly, hoping that this clarifies their structure and highlights the key point, i.e. the non-triviality of $Z^{BD}$. Along the way, we show that bipartite Kolyvagin systems for $E[p]$ are essentially unique, and their existence is equivalent to the $p$-parity conjecture for $E/K$; cf. \cref{bipvspar}. We also take the opportunity to briefly illustrate how the Birch and Swinnerton-Dyer conjecture for $E/K$ - including the $p$-part of the special value formula \eqref{svf} - in analytic rank at most one, as well as $p$-converse theorems, can be deduced from the non-triviality of $Z^{BD}$ (without using other Kolyvagin systems). Many of the relevant arguments, collected in \cref{weak-bsd} and \cref{bd-prim}, can be found, more or less disguised, scattered throughout the literature \cite{bbv16}, \cite{blv}, \cite{how06}, \cite{ki24}, \cite{swe22}, \cite{zha14}. The results mentioned above constitute a substantial part of the evidence currently supporting the Birch and Swinnerton-Dyer conjecture for $E/K$. The reader might find it of some interest that they all follow from one unifying principle (even though, of course, they have also been proved individually by other means, which we do not attempt to discuss here).

In \cref{back-vis}, which is purely expository, we recall some some basic notions and known results in the theory of visibility of Tate--Shafarevich classes, trying to make this text accessible to readers without previous exposure to the topic. Finally, in \cref{mod-sha-sect} we prove our main results, namely Theorem \ref{mainthm} and Corollary \ref{cor-jsconj}.

Sections \ref{sec-gensel} and \ref{back-vis} are independent of each other. The reader only interested in visibility of Tate--Shafarevich classes can start from \cref{back-vis}, then move on to \cref{mod-sha-sect} and refer back to \cref{sec-gensel} when needed. In order to understand the proofs in the last section, part of the general discussion in \cref{sec-gensel} is unnecessary: one only needs Theorem \ref{bd-gensel}, which is a linear algebra exercise using the two reciprocity laws (cf. \cref{bipkol}) and \cref{zbdntriv}.

\subsection*{Acknowledgements} This note originates from my attempt to explore the way paved by Bertolini--Darmon's work (especially \cite{bd05}), whose influence could hardly be overestimated. I would also like to gratefully acknowledge the influence of Christophe Cornut on this work: the germs of some of the ideas explained below were first developed in 2016 during my master thesis on \cite{zha14}, under Cornut's supervision. I am grateful to Christophe Cornut and Henri Darmon for independently suggesting that the results in \cref{sec-gensel} could have applications related to visibility of Tate--Shafarevich classes. I also wish to thank Luca Dall'Ava, Fred Diamond, Michele Fornea, Aleksander Horawa, Mohamed Moakher, Naomi Sweeting and Olivier Wittenberg for helpful discussions related to the topics studied in this article.

The work presented in this text was supported by the National Science Foundation under Grant
No. DMS-1928930, while I was in residence at the Mathematical Sciences Research Institute in Berkeley, California, during the Spring 2023 semester on Algebraic Cycles, $L$-values and Euler Systems. I wish to sincerely thank the staff for their support throughout my stay.

\section{Generating the mod $p$ Selmer group}\label{sec-gensel}

\subsection{Bipartite Kolyvagin systems modulo $p$}

\subsubsection{Setting}\label{bip-set}

Let $E/\Q$ be an elliptic curve of conductor $N$, corresponding to a newform
\begin{equation*}
f=\sum_{n=1}^\infty a_n q^n \in S_2(\Gamma_0(N)).
\end{equation*}
Let $p>3$ be a prime number not dividing $N$. We assume that $E$ has no complex multiplication, and that the Galois representation $\bar{\rho}: \mathrm{Gal}(\bar{\Q}/\Q)\rightarrow \mathrm{Aut}_{\mathbf{F}_p}(E[p])$ is surjective. Fix an imaginary quadratic field $K$ such that the prime factors of $pN$ are unramified in $K$, and write $N=N^-N^+$, where $q \mid N^-$ (resp. $q \mid N^+$) if $q$ is inert (resp. split) in $K$. Assume that $N^-$ is squarefree; the sign of the functional equation of $L(E/K, s)$ is
\begin{equation*}
\varepsilon(E/K)=(-1)^{|\{q \mid N^-\}|+1}.
\end{equation*}

A prime number $\ell\nmid pN$ inert in $K$ is called admissible if $\ell \not \equiv \pm 1 \pmod p$ and $a_\ell\equiv \pm (\ell+1) \pmod p$. We denote by $\mathcal{A}=\mathcal{A}^+\coprod \mathcal{A}^-$ the set of products $\mathfrak{l}=\ell_1\ell_2\cdots\ell_j$ of distinct admissible primes, where $\mathcal{A}^+=\{\mathfrak{l}=\ell_1\ell_2\cdots\ell_j : j \geq 0, |\{q \mid N^-\}|\not \equiv j \pmod 2\}$ and $\mathcal{A}^-=\{\mathfrak{l}=\ell_1\ell_2\cdots\ell_j : j \geq 0, |\{q \mid N^-\}|\equiv j \pmod 2\}$.

We will abusively denote the unique place of $K$ above an admissible prime $\ell \in \mathcal{A}$ still by $\ell$. We will need the following properties of admissible primes.
\begin{enumerate}
\item The $K_\ell$-module $E[p]$ splits (uniquely) as a direct sum $\Z/p\Z \oplus \Z/p\Z(1)$, and $H^1_{ur}(K_\ell, E[p])\simeq H^1(K_\ell, \Z/p\Z)\simeq \Z/p\Z$. We set $H^1_{tr}(K_\ell, E[p])=H^1(K_\ell, \Z/p\Z(1))\simeq \Z/p\Z$ (cf. \cite[Lemma 2.6]{bd05}, \cite[Lemma 2.2.1]{how06}).
\item For every $0 \neq c \in H^1(K, E[p])$ there are infinitely many $\ell \in \mathcal{A}$ such that $\mathrm{loc}_\ell(c)\neq 0$ (cf. \cite[Theorem 3.2]{bd05}).
\end{enumerate}

\subsubsection{Selmer groups}\label{sel-gps} The local condition at $\ell \in \mathcal{A}$ for the Selmer group $\mathrm{Sel}(K, E[p])$ is the unramified one. Given $\mathfrak{l}\in \mathcal{A}$, we define $\mathrm{Sel}_{(\mathfrak{l})}(K, E[p])$ changing the local condition to $H^1_{tr}(K_\ell, E[p])$ at every $\ell \mid \mathfrak{l}$. Furthermore, for $\ell \in \mathcal{A}$ not dividing $\mathfrak{l}$, using the local condition 0 (resp. $H^1(K_\ell, E[p])$) at $\ell$ we obtain a Selmer group $Sel_{(\mathfrak{l})\ell}(K, E[p])$ (resp. $Sel_{(\mathfrak{l})}^\ell(K, E[p])$). Thus, we have four Selmer groups fitting in the following rhombus:
\begin{center}
\begin{tikzcd}
& Sel_{(\mathfrak{l})}^\ell(K, E[p]) &\\
Sel_{(\mathfrak{l})}(K, E[p]) \arrow[ur] & & Sel_{(\mathfrak{l}\ell)}(K, E[p]) \arrow[ul] \\
& \arrow[ul] Sel_{(\mathfrak{l})\ell}(K, E[p]). \arrow[ur] &
\end{tikzcd}
\end{center}
By global duality we have $\mathrm{dim}(\mathrm{Sel}_{(\mathfrak{l})}^{\ell}(K, E[p]))-\mathrm{dim}(\mathrm{Sel}_{(\mathfrak{l})\ell}(K, E[p]))=1$, and either $\mathrm{Sel}_{(\mathfrak{l}\ell)}(K, E[p])=\mathrm{Sel}_{(\mathfrak{l})\ell}(K, E[p])$ and $\mathrm{Sel}_{(\mathfrak{l})}(K, E[p])=\mathrm{Sel}_{(\mathfrak{l})}^{\ell}(K, E[p])$, or $\mathrm{Sel}_{(\mathfrak{l}\ell)}(K, E[p])=\mathrm{Sel}_{(\mathfrak{l})}^{\ell}(K, E[p])$ and $\mathrm{Sel}_{(\mathfrak{l})}(K, E[p])=\mathrm{Sel}_{(\mathfrak{l})\ell}(K, E[p])$ (cf. \cite[Lemma 9]{gp12}, \cite[Proposition 2.2.9]{how06}). It follows that
\begin{equation}\label{chg-par}
\dim(\mathrm{Sel}_{(\mathfrak{l}\ell)}(K, E[p]))-\dim(\mathrm{Sel}_{(\mathfrak{l})}(K, E[p]))=\pm 1,
\end{equation}
and we have the following equivalence:
\begin{equation}\label{sel-1}
\mathrm{loc}_\ell: \mathrm{Sel}_{(\mathfrak{l})}(K, E[p])\rightarrow H^1_{ur}(K_\ell, E[p]) \text{ is surjective } \Leftrightarrow \dim(\mathrm{Sel}_{(\mathfrak{l}\ell)}(K, E[p]))=\dim(\mathrm{Sel}_{(\mathfrak{l})}(K, E[p]))-1.
\end{equation}
The implication from left to right will be crucial for us. Furthermore, let $r=\mathrm{dim}(\mathrm{Sel}(K, E[p]))$. Given $\mathfrak{l}=\ell_1\cdots \ell_j$ in $\mathcal{A}$, the dimension of $\mathrm{Sel}_{(\mathfrak{l})}(K, E[p])$ is at least $r-j$, so
\begin{equation}\label{nontr-sel}
j<r \Rightarrow \mathrm{Sel}_{(\mathfrak{l})}(K, E[p])\neq 0.
\end{equation}

\subsubsection{Bipartite Kolyvagin systems for $E[p]$}\label{bipkol} A bipartite Kolyvagin system $Z=Z^+\coprod Z^-$ for $E[p]$ consists of two collections
\begin{equation*}
Z^+=\{z_{\mathfrak{l}} \in \Z/p\Z, \mathfrak{l} \in \mathcal{A^+}\}, \; \; Z^-=\{z_{\mathfrak{l}} \in \mathrm{Sel}_{(\mathfrak{l})}(K, E[p]), \mathfrak{l} \in \mathcal{A^-}\}
\end{equation*}
satisfying the following relations (usually called first and second reciprocity law):
\begin{description}
\item[(RL1)] for every $\mathfrak{l}\in \mathcal{A}^-$ and every $\ell \mid \mathfrak{l}$, we have
\begin{equation*}
\mathrm{loc}_\ell(z_{\mathfrak{l}})=0 \in H^1_{tr}(K_\ell, E[p]) \Leftrightarrow z_{\mathfrak{l}/\ell}=0 \in \Z/p\Z;
\end{equation*}
\item[(RL2)] for every $\mathfrak{l}\in \mathcal{A}^-$ and every $\ell \in \mathcal{A}$, $\ell \nmid \mathfrak{l}$, we have
\begin{equation*}
\mathrm{loc}_\ell(z_{\mathfrak{l}})=0 \in H^1_{ur}(K_\ell, E[p]) \Leftrightarrow z_{\mathfrak{l}\ell}=0 \in \Z/p\Z.
\end{equation*}
\end{description}

The collection $Z_0$ of elements $z_\mathfrak{l}=0$ for every $\mathfrak{l}\in \mathcal{A}$ satisfies the above requirements; a bipartite Kolyvagin system for $E[p]$ different from $Z_0$ will be called non-trivial. We will recall later how an explicit bipartite Kolyvagin system $Z \neq Z_0$ can be constructed in our situation (under suitable assumptions), relying especially on the work of Bertolini--Darmon \cite{bd05} and Skinner--Urban \cite{su14}. In this section we will explore some consequences of the existence of a non-trivial $Z$. First of all, let us remark that either of the following assertions is equivalent to non-triviality of $Z$:
\begin{enumerate}[label=(\roman*)]
\item there is $\mathfrak{l}\in \mathcal{A}^+$ such that $z_\mathfrak{l}\neq 0$;
\item there is $\mathfrak{l}\in \mathcal{A}^-$ such that $z_\mathfrak{l}\neq 0$.
\end{enumerate}
Indeed, if (i) holds then by (RL1) for every $\ell \in \mathcal{A}$ not dividing $\mathfrak{l}$ we have $z_{\mathfrak{l}\ell}\neq 0$. Conversely, assume that $z_\mathfrak{l}\neq 0$ for some $\mathfrak{l}\in \mathcal{A}^-$. Choose an admissible prime $\ell \nmid \mathfrak{l}$ such that $\mathrm{loc}_\ell(z_{\mathfrak{l}})\neq 0$ (cf. \cref{bip-set}). By (RL2) we have $z_{\mathfrak{l}\ell}\neq 0$.

\begin{lem}\label{lem-selsmall}
Let $Z$ be a bipartite Kolyvagin system for $E[p]$.
\begin{enumerate}
\item If $\mathfrak{l}\in \mathcal{A}^+$ and $z_\mathfrak{l}\neq 0$ then $\mathrm{Sel}_{(\mathfrak{l})}(K, E[p])=0$.
\item If $\mathfrak{l}\in \mathcal{A}^-$ and $z_\mathfrak{l}\neq 0$ then $\mathrm{Sel}_{(\mathfrak{l})}(K, E[p])\simeq \Z/p\Z$.
\end{enumerate}
\end{lem}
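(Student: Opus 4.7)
The plan is to prove (1) directly by contradiction using the rhombus dichotomy of \cref{sel-gps} together with \textbf{(RL1)}, and then to deduce (2) from (1) via \textbf{(RL2)}. The whole argument is a short piece of linear algebra combining the two reciprocity laws, property (2) of admissible primes from \cref{bip-set}, and the key equivalence \eqref{sel-1}.

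For (1), suppose for contradiction that $\mathrm{Sel}_{(\mathfrak{l})}(K, E[p]) \neq 0$, and pick a nonzero class $c$ in it. By property (2) of admissible primes, choose $\ell \in \mathcal{A}$ with $\ell \nmid \mathfrak{l}$ and $\mathrm{loc}_\ell(c) \neq 0$. Since $c$ is unramified at $\ell$, the image $\mathrm{loc}_\ell(c)$ lies in the one-dimensional group $H^1_{ur}(K_\ell, E[p])$, so the restriction map $\mathrm{loc}_\ell \colon \mathrm{Sel}_{(\mathfrak{l})}(K, E[p]) \to H^1_{ur}(K_\ell, E[p])$ is surjective. By \eqref{sel-1} and the rhombus dichotomy we are then in the case $\mathrm{Sel}_{(\mathfrak{l}\ell)}(K, E[p]) = \mathrm{Sel}_{(\mathfrak{l})\ell}(K, E[p])$. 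Classes on the right-hand side have trivial local component at $\ell$ by definition, so in particular $\mathrm{loc}_\ell(z_{\mathfrak{l}\ell}) = 0$ in $H^1_{tr}(K_\ell, E[p])$. On the other hand $\mathfrak{l}\ell \in \mathcal{A}^-$, and \textbf{(RL1)} applied to $\mathfrak{l}\ell$ and the prime $\ell$ reads $\mathrm{loc}_\ell(z_{\mathfrak{l}\ell}) = 0 \Leftrightarrow z_\mathfrak{l} = 0$, contradicting the hypothesis $z_\mathfrak{l} \neq 0$.

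For (2), the hypothesis already gives $z_\mathfrak{l} \in \mathrm{Sel}_{(\mathfrak{l})}(K, E[p]) \smallsetminus \{0\}$, so the dimension is at least one. Applying property (2) to $z_\mathfrak{l}$, pick an admissible $\ell \nmid \mathfrak{l}$ with $\mathrm{loc}_\ell(z_\mathfrak{l}) \neq 0$ in $H^1_{ur}(K_\ell, E[p])$. By \textbf{(RL2)} this forces $z_{\mathfrak{l}\ell} \neq 0$ in $\Z/p\Z$; since $\mathfrak{l}\ell \in \mathcal{A}^+$, part (1) applied to $\mathfrak{l}\ell$ yields $\mathrm{Sel}_{(\mathfrak{l}\ell)}(K, E[p]) = 0$. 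Combining the surjectivity of $\mathrm{loc}_\ell$ on $\mathrm{Sel}_{(\mathfrak{l})}$ with \eqref{sel-1} gives $\dim \mathrm{Sel}_{(\mathfrak{l})}(K, E[p]) = \dim \mathrm{Sel}_{(\mathfrak{l}\ell)}(K, E[p]) + 1 = 1$, hence $\mathrm{Sel}_{(\mathfrak{l})}(K, E[p]) \simeq \Z/p\Z$.

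The only point that requires care is keeping straight which reciprocity law to invoke and in which direction one moves around the rhombus; no deeper input is needed beyond \eqref{sel-1} and property (2) of admissible primes, both of which are recalled in the excerpt.
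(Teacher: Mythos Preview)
Your proof is correct and follows essentially the same approach as the paper. For (1), the paper's one-line proof invokes ``global duality, (RL1) and the existence of admissible primes''; you have unpacked the global-duality step by using the rhombus dichotomy of \cref{sel-gps} (which the paper states as a consequence of global duality), and then applied (RL1) in the contrapositive direction. For (2), your argument is exactly the one the paper sketches: pick $\ell$ with $\mathrm{loc}_\ell(z_\mathfrak{l})\neq 0$, use (RL2) to get $z_{\mathfrak{l}\ell}\neq 0$, apply (1), and conclude via \eqref{sel-1}.
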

\begin{proof}
The first point follows from global duality, (RL1) and the existence of admissible primes as in (2) of \cref{bip-set} (cf. \cite[Proposition 2.3.5]{how06} or \cite[Corollary 7.14]{tam21}). The second point follows from the first, (RL2) and \eqref{sel-1}, since by \cref{bip-set}(2) there exists $\ell \in \mathcal{A}$ not dividing $\mathfrak{l}$ such that $\mathrm{loc}_\ell(z_\mathfrak{l})\neq 0$.
\end{proof}

\begin{prop}\label{par-prop}
If there exists a non-trivial bipartite Kolyvagin system for $E[p]$ then, for every $\mathfrak{l}=\ell_1\cdots \ell_j \in \mathcal{A}$, we have
\begin{equation}\label{par-sel}
\mathrm{dim}(Sel_{(\mathfrak{l})}(K, E[p]))\equiv |\{q \mid N^-\}|+j+1 \pmod 2.
\end{equation}
\end{prop}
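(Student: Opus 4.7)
The plan is to prove the congruence by a \emph{parity-propagation} argument: first establish it at a single element $\mathfrak{l}_0 \in \mathcal{A}$ using non-triviality, then show that both sides change parity simultaneously as we adjoin or remove a single admissible prime, so the congruence propagates throughout the whole set $\mathcal{A}$.

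For the base case, I would invoke the equivalence (i)$\Leftrightarrow$(ii) established just before Lemma \ref{lem-selsmall}: non-triviality of $Z$ guarantees the existence of $\mathfrak{l}_0 = \ell_1\cdots \ell_{j_0} \in \mathcal{A}^+$ with $z_{\mathfrak{l}_0} \neq 0$. By Lemma \ref{lem-selsmall}(1), $\mathrm{Sel}_{(\mathfrak{l}_0)}(K, E[p]) = 0$, so $\dim(\mathrm{Sel}_{(\mathfrak{l}_0)}(K, E[p])) = 0$. Since $\mathfrak{l}_0 \in \mathcal{A}^+$ means $|\{q \mid N^-\}| \not\equiv j_0 \pmod 2$, the quantity $|\{q \mid N^-\}| + j_0 + 1$ is even, so \eqref{par-sel} holds for $\mathfrak{l}_0$.

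Next, I would show the invariance step. Given any $\mathfrak{l}' \in \mathcal{A}$ and any admissible prime $\ell \nmid \mathfrak{l}'$, the key equation \eqref{chg-par} gives
\begin{equation*}
\dim(\mathrm{Sel}_{(\mathfrak{l}'\ell)}(K, E[p])) - \dim(\mathrm{Sel}_{(\mathfrak{l}')}(K, E[p])) = \pm 1,
\end{equation*}
while the number of primes dividing $\mathfrak{l}'\ell$ exceeds that of $\mathfrak{l}'$ by exactly $1$. Thus the quantity $\dim(\mathrm{Sel}_{(\mathfrak{l})}(K, E[p])) - j - |\{q \mid N^-\}| - 1$ has parity invariant under the operation $\mathfrak{l} \leftrightarrow \mathfrak{l}\ell$ (its variation is $\pm 1 - 1 \in \{0, -2\}$, always even).

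Finally, any $\mathfrak{l} \in \mathcal{A}$ can be reached from $\mathfrak{l}_0$ by a finite sequence of single-prime additions and removals: strip off the prime divisors of $\mathfrak{l}_0$ one at a time to reach the empty product $1 \in \mathcal{A}$, then adjoin the prime divisors of $\mathfrak{l}$ one at a time. Applying the invariance at every step, the validity of \eqref{par-sel} at $\mathfrak{l}_0$ transfers to $\mathfrak{l}$, completing the proof. There is no real obstacle here beyond carefully packaging the parity bookkeeping; the substance is entirely contained in Lemma \ref{lem-selsmall} and the dimension-jump relation \eqref{chg-par}.
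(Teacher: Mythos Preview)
Your proposal is correct and mirrors the paper's own proof: both establish a base case at some $\mathfrak{l}_0\in\mathcal{A}^+$ with $\mathrm{Sel}_{(\mathfrak{l}_0)}(K,E[p])=0$ via Lemma~\ref{lem-selsmall}(1), then propagate the parity using \eqref{chg-par}. The only cosmetic difference is that the paper routes through $\mathrm{lcm}(\mathfrak{l}_0,\mathfrak{l})$ rather than through the empty product $1$, but the underlying argument is the same.
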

\begin{proof}
By the previous lemma and the remark preceding it there is $\mathfrak{l}_0 \in \mathcal{A}^+$ such that $\mathrm{Sel}_{(\mathfrak{l}_0)}(K, E[p])=0$. Furthermore, adding a prime factor to $\mathfrak{l}\in \mathcal{A}$ changes the parity of both sides of the congruence \eqref{par-sel} (see \eqref{chg-par}). Hence, if $\mathfrak{l}, \mathfrak{l}' \in \mathcal{A}$ and $\mathfrak{l}\mid \mathfrak{l}'$ then \eqref{par-sel} holds true for $\mathrm{Sel}_{(\mathfrak{l})}(K, E[p])$ if and only if it does for $\mathrm{Sel}_{(\mathfrak{l}')}(K, E[p])$. Now fix $\mathfrak{l}_0$ as above and take $\mathfrak{l} \in \mathcal{A}$. As \eqref{par-sel} holds for $\mathrm{Sel}_{(\mathfrak{l}_0)}(K, E[p])$, it does for $\mathrm{Sel}_{(\mathrm{lcm}(\mathfrak{l}_0, \mathfrak{l}))}(K, E[p])$, hence for $\mathrm{Sel}_{(\mathfrak{l})}(K, E[p])$.
\end{proof}

The next proposition states that non-triviality of a bipartite Kolyvagin system $Z$ is equivalent to the converse of Lemma \ref{lem-selsmall}.

\begin{prop}\label{nontriv-conv}
Let $Z$ be a bipartite Kolyvagin system for $E[p]$. The following assertions are equivalent.
\begin{enumerate}
\item $Z \neq Z_0$.
\item For every $\mathfrak{l} \in \mathcal{A}$,
\begin{equation*}
\mathrm{Sel}_{(\mathfrak{l})}(K, E[p])=0 \Rightarrow \mathfrak{l} \in \mathcal{A}^+ \text{ and } z_\mathfrak{l} \neq 0.
\end{equation*}
\item For every $\mathfrak{l} \in \mathcal{A}$,
\begin{equation*}
\mathrm{Sel}_{(\mathfrak{l})}(K, E[p])\simeq \Z/p\Z \Rightarrow \mathfrak{l} \in \mathcal{A}^- \text{ and } z_\mathfrak{l} \neq 0.
\end{equation*}
\end{enumerate}
\end{prop}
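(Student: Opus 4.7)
The plan is to prove the three-way equivalence via the cycle $(2) \Rightarrow (1) \Rightarrow (3) \Rightarrow (2)$. The implications $(2) \Rightarrow (1)$ and $(3) \Rightarrow (1)$ are obtained by exhibiting, through iterated use of \cref{bip-set}(2), \eqref{sel-1}, and \eqref{chg-par}, a product $\mathfrak{l} \in \mathcal{A}$ with $\dim \mathrm{Sel}_{(\mathfrak{l})}$ equal to $0$ (resp.\ $1$); the hypothesis then yields $z_\mathfrak{l} \neq 0$, so $Z \neq Z_0$.

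For the non-trivial direction the core input is a transfer lemma extracted from the Selmer rhombus in \cref{sel-gps} combined with the reciprocity laws. Suppose $\mathfrak{l} \in \mathcal{A}^+$ has $\mathrm{Sel}_{(\mathfrak{l})} = 0$ and $\ell \nmid \mathfrak{l}$ is admissible: then $\dim \mathrm{Sel}_{(\mathfrak{l}\ell)} = 1$, and the kernel of $\mathrm{loc}_\ell : \mathrm{Sel}_{(\mathfrak{l}\ell)} \to H^1_{tr}(K_\ell, E[p])$ lies in $\mathrm{Sel}_{(\mathfrak{l})\ell} \subset \mathrm{Sel}_{(\mathfrak{l})} = 0$, so this map is an isomorphism of $\Z/p\Z$-modules, and (RL1) yields $z_\mathfrak{l} \neq 0 \iff z_{\mathfrak{l}\ell} \neq 0$. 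A symmetric statement via (RL2) handles moves from an $\mathfrak{n} \in \mathcal{A}^-$ with $\mathrm{Sel}_{(\mathfrak{n})} \simeq \Z/p\Z$ to a neighbor in $\mathcal{A}^+$ with vanishing Selmer. Once this is established, $(1) \Rightarrow (3)$ reduces to $(1) \Rightarrow (2)$: given $\mathfrak{n}$ as in (3), \cref{bip-set}(2) applied to a generator of $\mathrm{Sel}_{(\mathfrak{n})}$ produces a pivot $\ell$ with $\mathrm{Sel}_{(\mathfrak{n}\ell)} = 0$, after which the transfer lemma converts $z_{\mathfrak{n}\ell} \neq 0$ into $z_\mathfrak{n} \neq 0$.

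For $(1) \Rightarrow (2)$: let $\mathfrak{l} \in \mathcal{A}^+$ (by parity, using Proposition \ref{par-prop}) with $\mathrm{Sel}_{(\mathfrak{l})} = 0$, and fix $\mathfrak{l}_0 \in \mathcal{A}^+$ with $z_{\mathfrak{l}_0} \neq 0$ supplied by (1) together with the remark preceding Lemma \ref{lem-selsmall} (so $\mathrm{Sel}_{(\mathfrak{l}_0)} = 0$ by Lemma \ref{lem-selsmall}(1)). I would connect $\mathfrak{l}$ to $\mathfrak{l}_0$ via a chain of single-swap moves, each of the shape $\mathfrak{l}' , \mathfrak{l}'\ell , \mathfrak{l}'\ell/\ell^*$, at each of which non-triviality of $z$ is propagated by two successive applications of the transfer lemma; the pivot primes are selected via \cref{bip-set}(2) so that the $\mathcal{A}^+$ endpoints again have vanishing Selmer. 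The main obstacle is the combinatorial connectivity statement — that any two products in $\mathcal{A}^+$ with $\mathrm{Sel} = 0$ admit such a chain, with all $\mathcal{A}^+$ intermediates of vanishing Selmer and all $\mathcal{A}^-$ intermediates of Selmer $\simeq \Z/p\Z$ — which I expect to resolve by combining \eqref{chg-par} with the abundance of admissible primes from \cref{bip-set}(2).
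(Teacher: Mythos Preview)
Your scheme---prove transfer lemmas via the rhombus and the two reciprocity laws, then argue connectivity among indices $\mathfrak{l}$ with $\dim \mathrm{Sel}_{(\mathfrak{l})} \leq 1$---is precisely the paper's strategy; your transfer statements are its Steps~1 and~2, and your reduction of $(1)\Rightarrow(3)$ to $(1)\Rightarrow(2)$ is the content of its implication $(2)\Rightarrow(3)$.

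The weak point is the connectivity argument, which you correctly flag but leave open. Two remarks. First, pure swap moves $\mathfrak{l}'\rightsquigarrow\mathfrak{l}'\ell\rightsquigarrow\mathfrak{l}'\ell/\ell^*$ preserve the number of prime factors, so they cannot by themselves connect $\mathfrak{l}_0$ to $\mathfrak{l}$ when these have different lengths (membership in $\mathcal{A}^+$ fixes only the parity of the length, not the length itself). Second, even after allowing single add/remove moves, connectivity within $\{\mathfrak{l}:\dim\mathrm{Sel}_{(\mathfrak{l})}\leq 1\}$ does not follow formally from \eqref{chg-par} and \cref{bip-set}(2). The paper first passes to a common multiple lying in this set, then for $\mathfrak{l}_1\mid\mathfrak{l}_2$ inducts on the number of factors of $\mathfrak{l}_3=\mathfrak{l}_2/\mathfrak{l}_1$. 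The delicate case is when $\dim\mathrm{Sel}_{(\mathfrak{l}_2/\ell)}=2$ for \emph{every} $\ell\mid\mathfrak{l}_3$, so no immediate descent from $\mathfrak{l}_2$ toward $\mathfrak{l}_1$ stays in the good set. One then observes that $\mathrm{loc}_\ell$ vanishes on $\mathrm{Sel}_{(\mathfrak{l}_2)}$ for all such $\ell$, forcing $\mathrm{Sel}_{(\mathfrak{l}_2)}=\mathrm{Sel}_{(\mathfrak{l}_1)}$, and picks an auxiliary $\ell'\nmid\mathfrak{l}_2$ with $\mathrm{loc}_{\ell'}$ nonzero on this common line; the detour $\mathfrak{l}_1\rightsquigarrow\mathfrak{l}_1\ell'\rightsquigarrow\mathfrak{l}_2\ell'/\ell\rightsquigarrow\mathfrak{l}_2\ell'\rightsquigarrow\mathfrak{l}_2$ then stays in the good set and its middle segment falls under the inductive hypothesis. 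This auxiliary-prime detour is the missing ingredient in your sketch.
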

\begin{proof}\leavevmode
\begin{description}
\item[$(1)\Rightarrow (2)$] The argument below - which actually shows that $(1)$ implies $(2)$ and $(3)$ - is a straightforward adaptation of the one in the proof of \cite[Theorem 3.3.8]{swe22}; we include the details for completeness.

\textit{Step 0.} By Proposition \ref{par-prop} if $\mathfrak{l} \in \mathcal{A}$ and $\mathrm{Sel}_{(\mathfrak{l})}(K, E[p])=0$ (resp. $\mathrm{Sel}_{(\mathfrak{l})}(K, E[p])\simeq \Z/p\Z$) then $\mathfrak{l}\in \mathcal{A}^+$ (resp. $\mathfrak{l}\in \mathcal{A}^-$). Let $Z^\heartsuit\subset Z$ be the set of elements indexed by $\mathfrak{l} \in \mathcal{A}$ such that $\dim(\mathrm{Sel}_{(\mathfrak{l})}(K, E[p]))\leq 1$. We will show that every $z_\mathfrak{l} \in Z^\heartsuit$ is non zero.

\textit{Step 1.} Let $\mathfrak{l}\in \mathcal{A}^-$ and $\ell \nmid \mathfrak{l}$ be an admissible prime. If $z_\mathfrak{l}, z_{\mathfrak{l}\ell} \in Z^\heartsuit$ then
\begin{equation*}
z_\mathfrak{l}\neq 0 \Leftrightarrow z_{\mathfrak{l}\ell}\neq 0.
\end{equation*}
Indeed, if $z_\mathfrak{l}\neq 0$ then it generates $\mathrm{Sel}_{(\mathfrak{l})}(K, E[p])$; furthermore, by \eqref{chg-par} the dimension of $\mathrm{Sel}_{(\mathfrak{l}\ell)}(K, E[p])$ is even, hence it is zero. By \eqref{sel-1} we must have $\mathrm{loc}_\ell(z_\mathfrak{l})\neq 0$, hence $z_{\mathfrak{l}\ell}\neq 0$ by (RL2). Conversely, if $z_{\mathfrak{l}\ell}\neq 0$ then $z_\mathfrak{l}\neq 0$ by (RL2).

\textit{Step 2.} Let $\mathfrak{l}\in \mathcal{A}^+$ and $\ell \nmid \mathfrak{l}$ be an admissible prime. If $z_\mathfrak{l}, z_{\mathfrak{l}\ell} \in Z^\heartsuit$ then
\begin{equation*}
z_\mathfrak{l}\neq 0 \Leftrightarrow z_{\mathfrak{l}\ell}\neq 0.
\end{equation*}
Indeed, if $z_\mathfrak{l}\neq 0$ then $z_{\mathfrak{l}\ell}\neq 0$ by (RL1). Conversely, if $z_{\mathfrak{l}\ell}\neq 0$ then it generates $\mathrm{Sel}_{(\mathfrak{l}\ell)}(K, E[p])$; furthermore, $\mathrm{Sel}_{(\mathfrak{l})}(K, E[p])$ is trivial. By the discussion in \cref{sel-gps} we have $\mathrm{loc}_{\ell}(z_{\mathfrak{l}\ell}) \neq 0$, and (RL1) implies that $z_\mathfrak{l}\neq 0$.

\textit{Step 3.} By assumption there is $\mathfrak{l}_0\in \mathcal{A}$ such that $z_{\mathfrak{l}_0}\neq 0$ (such a $z_{\mathfrak{l}_0}$ automatically belongs to $Z^\heartsuit$ by Lemma \ref{lem-selsmall}). Let $z_\mathfrak{l}$ be an element in $Z^\heartsuit$. We will prove that there is a ``path'' in $Z^\heartsuit$ from $z_{\mathfrak{l}_0}$ to $z_\mathfrak{l}$ such that the index of each element in the path is obtained from the index of the previous element adding or removing an admissible prime. By the previous steps, this will imply that $z_\mathfrak{l}\neq 0$, as we wanted to prove.

Note that there is $\mathfrak{l}' \in \mathcal{A}$ multiple of $\mathfrak{l}$ and $\mathfrak{l}_0$ such that $z_{\mathfrak{l}'}\in Z^\heartsuit$ (using \cref{bip-set}(2) and \eqref{sel-1}, add admissible primes to $\mathrm{lcm}(\mathfrak{l}, \mathfrak{l}_0)$ if needed, until the dimension of the Selmer group drops at most to one). Therefore it suffices to show that, given any $z_{\mathfrak{l_1}}, z_{\mathfrak{l_2}} \in Z^\heartsuit$ with $\mathfrak{l}_1\mid \mathfrak{l}_2$ and $\mathfrak{l}_1\neq \mathfrak{l}_2$, we can connect $z_{\mathfrak{l_1}}$ to $ z_{\mathfrak{l_2}}$ via a path as above. We prove this by induction on the number $d$ of prime factors of $\mathfrak{l}_3=\mathfrak{l}_2/\mathfrak{l}_1$; the statement for $d=1$ is true by definition. Assume that $d>1$; for every $\ell \mid \mathfrak{l}_3$ we know by induction that, if $z_{\mathfrak{l_2}/\ell}\in Z^\heartsuit$, then $z_{\mathfrak{l}_1}$ and $z_{\mathfrak{l_2}/\ell}$ are connected (via a path as above), hence we are done. If $z_{\mathfrak{l_2}/\ell}$ is not in $Z^\heartsuit$ for any $\ell \mid \mathfrak{l}_3$, by the discussion in \cref{sel-gps} we must have $\dim(\mathrm{Sel}_{(\mathfrak{l_2}/\ell)}(K, E[p]))=2$ and $\mathrm{loc}_{\ell}(\mathrm{Sel}_{(\mathfrak{l}_2)}(K, E[p]))=0$ for every $\ell \mid \mathfrak{l}_3$. Therefore we find that $\mathrm{Sel}_{(\mathfrak{l}_2)}(K, E[p])\subset \mathrm{Sel}_{(\mathfrak{l}_1)}(K, E[p])$. As $\mathrm{Sel}_{(\mathfrak{l}_2)}(K, E[p])$ is one-dimensional, the inclusion is an equality. Choose an admissible prime $\ell'\nmid \mathfrak{l}_2$ such that $\mathrm{loc}_{\ell'}(\mathrm{Sel}_{(\mathfrak{l}_1)}(K, E[p]))\neq 0$; then $\mathrm{Sel}_{(\mathfrak{l}_1\ell')}(K, E[p]))=\mathrm{Sel}_{(\mathfrak{l}_2\ell')}(K, E[p]))=0$, so $\mathrm{Sel}_{(\mathfrak{l}_2\ell'/\ell)}(K, E[p]))\simeq \Z/p\Z$ for every $\ell \mid \mathfrak{l}_3$. Now we can connect, for any $\ell \mid \mathfrak{l}_3$,
\begin{equation*}
z_{\mathfrak{l}_1}\rightsquigarrow	z_{\mathfrak{l}_1\ell '} \rightsquigarrow z_{\mathfrak{l}_2\ell '/\ell} \rightsquigarrow z_{\mathfrak{l}_2\ell '}\rightsquigarrow z_{\mathfrak{l}_2}.
\end{equation*}
\item[$(2)\Rightarrow (3)$] If $\mathrm{Sel}_{(\mathfrak{l})}(K, E[p])\simeq \Z/p\Z$ then by \cref{bip-set}(2) and \eqref{sel-1} there is $\ell \nmid \mathfrak{l}$ admissible such that $\mathrm{Sel}_{(\mathfrak{l}\ell)}(K, E[p])=0$, hence $\mathfrak{l}\ell \in \mathcal{A}^+$ and $z_{\mathfrak{l}\ell}\neq 0$. Therefore $\mathfrak{l} \in \mathcal{A}^-$ and $z_\mathfrak{l}\neq 0$ by (RL2).
\item[$(3)\Rightarrow (1)$] Using \cref{bip-set}(2) and \eqref{sel-1} repeatedly we see that there is $\mathfrak{l}\in \mathcal{A}$ such that $\mathrm{Sel}_{(\mathfrak{l})}(K, E[p])=0$. For every $\ell \in \mathcal{A}$ not dividing $\mathfrak{l}$ we have $\mathrm{Sel}_{(\mathfrak{l}\ell)}(K, E[p])\simeq \Z/p\Z$ (by \eqref{chg-par}), hence $z_{\mathfrak{l}\ell}\neq 0$.
\end{description}
\end{proof}

\subsubsection{}\label{bipvspar} To sum up, the linear-algebraic gymnastics done in this section gives us the following information.

\begin{enumerate}[label=(\roman*)]
\item The existence of a non-trivial bipartite Kolyvagin system for $E[p]$ implies that $\mathrm{dim}(\mathrm{Sel}(K, E[p]))\equiv |\{q \mid N^-\}|+1 \pmod 2$ (this is the conclusion of Proposition \ref{par-prop} for $\mathfrak{l}=1$).
\item Conversely, suppose that $\mathrm{dim}(\mathrm{Sel}(K, E[p]))\equiv |\{q \mid N^-\}|+1 \pmod 2$; then the congruence \eqref{par-sel} holds for every $\mathfrak{l}\in \mathcal{A}$ (cf. the proof of Proposition \ref{par-prop}). For every $\mathfrak{l} \in \mathcal{A}$ such that $\mathrm{Sel}_{(\mathfrak{l})}(K, E[p])=0$ (resp. $\mathrm{Sel}_{(\mathfrak{l})}(K, E[p])\simeq \Z/p\Z$) pick a non-zero element $z_{\mathfrak{l}}$ in $\Z/p\Z$ (resp. in $\mathrm{Sel}_{(\mathfrak{l})}(K, E[p])$). We will now check that these elements form a bipartite Kolyvagin system $Z$; by Proposition \ref{nontriv-conv}, any other non-trivial bipartite Kolyvagin system for $E[p]$ is obtained multiplying the elements of $Z$ by elements of $(\Z/p\Z)^\times$.\\
Let $\mathfrak{l} \in \mathcal{A}$ and let $\ell \nmid \mathfrak{l}$ be an admissible prime. If $\mathrm{dim}(\mathrm{Sel}_{(\mathfrak{l})}(K, E[p]))\geq 3$ then $\mathrm{dim}(\mathrm{Sel}_{(\mathfrak{l}\ell)}(K, E[p]))\geq 2$ and both $z_{\mathfrak{l}}$ and $z_{\mathfrak{l}\ell}$ are zero. If $\mathrm{dim}(\mathrm{Sel}_{(\mathfrak{l})}(K, E[p]))=2$ then $z_\mathfrak{l}=0$; furthermore, by \eqref{chg-par}, the dimension of $\mathrm{Sel}_{(\mathfrak{l}\ell)}(K, E[p])$ is 1 or 3. In the latter case $z_{\mathfrak{l}\ell}=0$; in the former case, by \eqref{sel-1} we must have $\mathrm{loc}_\ell(\mathrm{Sel}_{(\mathfrak{l}\ell)}(K, E[p]))=0$, hence $\mathrm{loc}_\ell(z_{\mathfrak{l}\ell})=0$. If $\mathrm{dim}(\mathrm{Sel}_{(\mathfrak{l})}(K, E[p]))=1$ then $\mathrm{Sel}_{(\mathfrak{l}\ell)}(K, E[p])$ has dimension 0 or 2, and $\mathrm{Sel}_{(\mathfrak{l}\ell)}(K, E[p])=0$ if and only if $\mathrm{loc}_\ell(\mathrm{Sel}_{(\mathfrak{l})}(K, E[p]))\neq 0$, hence $\mathrm{loc}_\ell(z_\mathfrak{l})\neq 0 \Leftrightarrow z_{\mathfrak{l}\ell}\neq 0$. Finally, if $\mathrm{Sel}_{(\mathfrak{l})}(K, E[p])=0$ then $\mathrm{Sel}_{(\mathfrak{l}\ell)}(K, E[p])$ is one-dimensional and $\mathrm{loc}_\ell(\mathrm{Sel}_{(\mathfrak{l}\ell)}(K, E[p]))$ is non-zero, hence $\mathrm{loc}_\ell(z_{\mathfrak{l}\ell})\neq 0$.
\end{enumerate}

Note that $\mathrm{Sel}(K, E[p])=\mathrm{Sel}(K, E[p^\infty])[p]$ (cf. \cite[Proposition 5.8]{tam21}), so the dimension of $\mathrm{Sel}(K, E[p])$ and the $\Z_p$-corank of $\mathrm{Sel}(K, E[p^\infty])$ have the same parity (recall that, thanks to Cassels--Tate, we know that the cardinality of the torsion part of $\Sh(E/K)[p^\infty]$ is a square). Therefore, the congruence in (i) is the parity conjecture for $\mathrm{Sel}(K, E[p^\infty])$. Summing up, we learn that the existence of a non-trivial $Z$ is equivalent to the $p$-parity conjecture (cf. also \cite[Proposition 8.8]{tam21}, \cite[\S 9.2]{zha14}). The conjecture is already known - in bigger generality - by work of Nekov{\'a}{\v{r}} \cite{nek01}, \cite{nek06}, which exploits (in a different way) the non-triviality of a suitable Euler system \cite{cv07}.

A construction of an explicit, non-trivial $Z$, consisting of elements $z_\mathfrak{l}$ related to special values of $L$-functions of modular forms, yields further information on the Birch and Swinnerton-Dyer conjecture, as we will recall in \cref{bdks} below.

\subsection{A basis of $\mathrm{Sel}(K, E[p])$} Let $Z$ be a non-trivial bipartite Kolyvagin system for $E[p]$. By Proposition \ref{nontriv-conv}, if  $\mathrm{Sel}(K, E[p])$ is one-dimensional, then it is generated by the class $z_1$. The following theorem generalizes this observation, without imposing any restriction on the dimension of $\mathrm{Sel}(K, E[p])$.

\begin{prop}\label{gen-sel-abs}
Let $Z$ be a non-trivial bipartite Kolyvagin system for $E[p]$, and let $r=\mathrm{dim}(\mathrm{Sel}(K, E[p]))$. There exists $\mathfrak{l}=\ell_1\ell_2\cdots \ell_r \in \mathcal{A}$ such that $\{z_{\mathfrak{l/\ell_i}}, 1 \leq i \leq r\}$ is a basis of $\mathrm{Sel}(K, E[p])$.
\end{prop}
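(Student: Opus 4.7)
The plan is to construct $\mathfrak{l}$ together with a basis of $\mathrm{Sel}(K, E[p])$ by a Gram--Schmidt procedure against admissible localizations, and then read off the conclusion from the two reciprocity laws.

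Set $r = \dim(\mathrm{Sel}(K, E[p]))$ and fix any basis $s_1, \ldots, s_r$. I would inductively produce admissible primes $\ell_1, \ldots, \ell_r$ and modify the basis so that, after step $k$, one has $\mathrm{loc}_{\ell_i}(s_j) = \delta_{ij}$ in $H^1_{ur}(K_{\ell_i}, E[p]) \simeq \mathbf{Z}/p\mathbf{Z}$ for $i, j \leq k$ and $\mathrm{loc}_{\ell_i}(s_j) = 0$ for $i \leq k < j$. The key observation enabling the induction is that the current $s_k$ lies in $\mathrm{Sel}_{(\ell_1 \cdots \ell_{k-1})}(K, E[p])$: it is unramified at every admissible prime, and $\mathrm{loc}_{\ell_i}(s_k) = 0$ trivially satisfies the transverse condition at each $\ell_i$ with $i < k$. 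Hence \cref{bip-set}(2) furnishes $\ell_k$ admissible and distinct from $\ell_1, \ldots, \ell_{k-1}$ with $\mathrm{loc}_{\ell_k}(s_k) \neq 0$; one rescales $s_k$ and subtracts appropriate multiples from $s_{k+1}, \ldots, s_r$ to close the step. By \eqref{sel-1} each such map is surjective, so $\dim(\mathrm{Sel}_{(\ell_1 \cdots \ell_k)}) = r - k$; in particular $\mathrm{Sel}_{(\mathfrak{l})}(K, E[p]) = 0$ for $\mathfrak{l} = \ell_1 \cdots \ell_r$.

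Next I would collect the necessary Selmer dimensions: \eqref{chg-par} gives $\dim(\mathrm{Sel}_{(\mathfrak{l}/\ell_i)}) = 1$ and $\dim(\mathrm{Sel}_{(\mathfrak{l}/(\ell_i\ell_j))}) \in \{0, 2\}$, and the second value must be $2$ because the final $s_i, s_j$ both belong to $\mathrm{Sel}_{(\mathfrak{l}/(\ell_i\ell_j))}(K, E[p])$ and are linearly independent. Proposition \ref{par-prop} places $\mathfrak{l}, \mathfrak{l}/(\ell_i\ell_j) \in \mathcal{A}^+$ and $\mathfrak{l}/\ell_i \in \mathcal{A}^-$, so by Proposition \ref{nontriv-conv} one has $z_\mathfrak{l}, z_{\mathfrak{l}/\ell_i} \neq 0$, while Lemma \ref{lem-selsmall} forces $z_{\mathfrak{l}/(\ell_i\ell_j)} = 0$.

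The last step applies the two reciprocity laws to $z_{\mathfrak{l}/\ell_i}$. (RL2), with $\mathfrak{l}$ replaced by $\mathfrak{l}/\ell_i$ and $\ell = \ell_i$, converts $z_\mathfrak{l} \neq 0$ into $\mathrm{loc}_{\ell_i}(z_{\mathfrak{l}/\ell_i}) \neq 0 \in H^1_{ur}(K_{\ell_i}, E[p])$, while (RL1) at $\ell_j$ for $j \neq i$ converts $z_{\mathfrak{l}/(\ell_i\ell_j)} = 0$ into $\mathrm{loc}_{\ell_j}(z_{\mathfrak{l}/\ell_i}) = 0$. The vanishings at each $\ell_j$, $j \neq i$, upgrade $z_{\mathfrak{l}/\ell_i}$ from $\mathrm{Sel}_{(\mathfrak{l}/\ell_i)}(K, E[p])$ to $\mathrm{Sel}(K, E[p])$, and under the localization isomorphism $\mathrm{Sel}(K, E[p]) \xrightarrow{\sim} \bigoplus_{i=1}^{r} H^1_{ur}(K_{\ell_i}, E[p])$ produced by the Gram--Schmidt construction, $z_{\mathfrak{l}/\ell_i}$ maps to a non-zero multiple of the $i$-th standard basis vector; hence the $z_{\mathfrak{l}/\ell_i}$ form a basis of $\mathrm{Sel}(K, E[p])$. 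The only real work is in the Gram--Schmidt step---keeping the intermediate vectors inside the successive Selmer groups while making the localization matrix diagonal---after which the reciprocity laws and Proposition \ref{nontriv-conv} finish everything formally.
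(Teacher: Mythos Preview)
Your argument is essentially the paper's: choose admissible primes reducing the Selmer dimension to zero, use (RL1) together with $z_{\mathfrak{l}/(\ell_i\ell_j)}=0$ to push each $z_{\mathfrak{l}/\ell_i}$ into $\mathrm{Sel}(K,E[p])$, and use (RL2) with $z_\mathfrak{l}\neq 0$ to see that the localizations form a diagonal matrix. One small slip: the Gram--Schmidt step as you describe it (subtracting only from $s_{k+1},\ldots,s_r$) yields a \emph{triangular} localization matrix, not the diagonal $\delta_{ij}$ you claim, so your assertion that $s_i,s_j\in\mathrm{Sel}_{(\mathfrak{l}/(\ell_i\ell_j))}(K,E[p])$ is unjustified; you must also clear $\mathrm{loc}_{\ell_k}(s_j)$ for $j<k$, which is harmless since $\mathrm{loc}_{\ell_i}(s_k)=0$ for $i<k$. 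The paper sidesteps this bookkeeping: it obtains $\mathrm{Sel}_{(\mathfrak{l}/(\ell_i\ell_j))}(K,E[p])\neq 0$ directly from \eqref{nontr-sel} (as $\mathfrak{l}/(\ell_i\ell_j)$ has $r-2<r$ prime factors), which is all Lemma~\ref{lem-selsmall} needs to force $z_{\mathfrak{l}/(\ell_i\ell_j)}=0$.
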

\begin{proof}
Using \cref{bip-set}(2) and \eqref{sel-1}, choose distinct admissible primes $\ell_1, \ldots, \ell_r$ such that, letting $\mathfrak{l}=\ell_1\cdots\ell_r$, we have $\mathrm{Sel}_{(\mathfrak{l})}(K, E[p])=0$. By Proposition \ref{par-prop}, for every $1 \leq i \leq r$ we have $\mathfrak{l}/\ell_i \in \mathcal{A}^-$, hence $z_{\mathfrak{l}/\ell_i} \in \mathrm{Sel}_{(\mathfrak{l}/\ell_i)}(K, E[p])$. Fix $1 \leq i, j \leq r$ with $j \neq i$. By \eqref{nontr-sel} we have $\mathrm{Sel}_{(\mathfrak{l}/(\ell_i\ell_j))}(K, E[p])\neq 0$, hence $\mathrm{loc}_{\ell_j}(z_{\mathfrak{l}/\ell_i})=0$ by (RL1) and Lemma \ref{lem-selsmall}. Therefore, $z_{\mathfrak{l}/\ell_i}$ belongs to $\mathrm{Sel}(K, E[p])$. Finally, by (RL2) and Proposition \ref{nontriv-conv}, the image of $(z_{\mathfrak{l/\ell_i}})_{1 \leq i \leq r}$ via the map
\begin{equation*}
\mathrm{loc}_{\ell_1}\oplus \cdots \oplus \mathrm{loc}_{\ell_r}: \mathrm{Sel}(K, E[p])\rightarrow H^1_{ur}(K_{\ell_1}, E[p])\oplus \cdots \oplus H^1_{ur}(K_{\ell_r}, E[p])\simeq (\Z/p\Z)^r
\end{equation*}
is the canonical basis of $(\Z/p\Z)^r$ (up to multiplication by a unit on each factor).
\end{proof}

\subsection{Bertolini--Darmon's bipartite Kolyvagin system modulo $p$}\label{bdks} We will now work with Bertolini--Darmon's bipartite Kolyvagin system $Z^{BD}$ for $E[p]$, whose construction relies on congruences of modular forms.

\subsubsection{Level raising}\label{levrais} For every $\mathfrak{l} \in \mathcal{A}$, there is a cuspidal, new (normalized) eigenform $f_{\mathfrak{l}}\in S_2(\Gamma_0(N\mathfrak{l}))$ and a prime ideal $\mathfrak{p}_\mathfrak{l}$ of the ring $\mathcal{O}_{\mathfrak{l}}\subset \C$ generated by the Fourier coefficients of $f_{\mathfrak{l}}$ such that $\mathcal{O}_{\mathfrak{l}}/\mathfrak{p}_\mathfrak{l}\simeq \Z/p\Z$ and the residual Galois representation $\bar{\rho}_{f_\mathfrak{l}}: \mathrm{Gal}(\bar{\Q}/\Q)\rightarrow \mathrm{GL}_2(\mathcal{O}_{\mathfrak{l}}/\mathfrak{p}_\mathfrak{l})$ is isomorphic to $\bar{\rho}$. This follows from \cite[Theorem 1]{dt94}, as explained in the proof of \cite[Theorem 2.1]{zha14} (note that the isomorphism $\bar{\rho}_{f_\mathfrak{l}}\simeq \bar{\rho}$ forces $\mathcal{O}_{\mathfrak{l}}/\mathfrak{p}_\mathfrak{l}\simeq \Z/p\Z$). Let $\bar{\mathcal{O}}_\mathfrak{l}$ be the integral closure of $\mathcal{O}_\mathfrak{l}$, and fix a prime ideal $\bar{\mathfrak{p}}_\mathfrak{l}\subset \bar{\mathcal{O}}_\mathfrak{l}$ such that $\bar{\mathfrak{p}}_\mathfrak{l}\cap \mathcal{O}_{\mathfrak{l}}=\mathfrak{p}_{\mathfrak{l}}$.

\subsubsection{Assumptions}\label{ass-bd} In addition to the hypotheses in \cref{bip-set}, let us assume throughout the rest of this section that $N$ is squarefree and, if either $q \mid N^-$ and $q \equiv \pm 1 \pmod p$, or $q \mid N^+$, then $\bar{\rho}$ is ramified at $q$. In particular, on the one hand hypothesis CR in \cite{pw11} holds; on the other hand, the local conditions defining $\mathrm{Sel}(K, E[p])$ can be expressed purely in terms of the $\mathrm{Gal}(\bar{K}/K)$-module $E[p]$; in particular, the mod $\mathfrak{p}_{\mathfrak{l}}$ Selmer group of $f_{\mathfrak{l}}$ is isomorphic to $\mathrm{Sel}_{(\mathfrak{l})}(K, E[p])$ (cf. the proof of \cite[Theorem 5.2]{zha14}).

\subsubsection{Construction of $Z^{BD}$}\label{constr-bd} Under the above assumptions, recall that the bipartite Kolyvagin system $Z^{BD}=\{z_\mathfrak{l}^{BD}, \mathfrak{l} \in \mathcal{A}\}$ is constructed as follows. For every $\mathfrak{l}\in \mathcal{A}^+$ (resp. $\mathfrak{l}\in \mathcal{A}^-$) let $B_\mathfrak{l}$ be the quaternion algebra over $\Q$ ramified at $N^-\mathfrak{l}\infty$ (resp. $N^-\mathfrak{l}$), and let $\mathrm{Sh}_{N^+}(B_\mathfrak{l}^\times)$ be the Shimura set (resp. Shimura curve) of level $\Gamma_0(N^+)$ attached to $B_\mathfrak{l}^\times$. In both cases, let $CM(\mathcal{O}_K)$ be the set of points with $CM$ by $\mathcal{O}_K$.
\begin{enumerate}
\item For $\mathfrak{l}\in \mathcal{A}^+$, the vector space of functions $\mathrm{Sh}_{N^+}(B_\mathfrak{l}^\times) \rightarrow \bar{\mathcal{O}}_\mathfrak{l}/\bar{\mathfrak{p}}_\mathfrak{l}$ on which the Hecke algebra acts as on $f_\mathfrak{l}$ is non-zero (cf. \cite[\S 6.1]{zha14}), hence so is the space of $\mathbf{F}_p$-valued functions on $\mathrm{Sh}_{N^+}(B_\mathfrak{l}^\times)$ with the same property. Furthermore, this $\mathbf{F}_p$-vector space is one-dimensional (cf. \cite[Proposition 6.5, Theorem 6.2]{pw11}). Fixing a basis $\bar{f}_{\mathfrak{l}}^{B_{_\mathfrak{l}}}$, the element $z_\mathfrak{l}^{BD} \in \Z/p\Z$ is the sum $\sum_{Q \in CM(\mathcal{O}_K)}\bar{f}_{\mathfrak{l}}^{B_{\mathfrak{l}}}(Q)$.
\item For $\mathfrak{l}\in \mathcal{A}^-$, the class $z_\mathfrak{l}^{BD} \in \mathrm{Sel}_{(\mathfrak{l})}(K, E[p])$ is obtained from the divisor $\sum_{Q \in CM(\mathcal{O}_K)}Q$ on $\mathrm{Sh}_{N^+}(B_\mathfrak{l}^\times)$. We will explain this in more detail in \cref{const-zl}, where the precise construction will be needed.
\end{enumerate}
The proof that the above classes satisfy the relations yielding a bipartite Kolyvagin system is non-trivial; it rests on the structure of special fibers of Shimura curves - allowing to realize level raising and Jacquet--Langlands functoriality geometrically - as well as on Ihara's lemma and multiplicity one modulo $p$. See \cite[\S 8, 9]{bd05} and \cite{pw11}.

\subsubsection{Non triviality of $Z^{BD}$}\label{zbdntriv} For $\mathfrak{l}\in \mathcal{A}^+$, the element $z_\mathfrak{l}^{BD}$ non-zero if and only if the algebraic part of the special value $L^{alg}(f_\mathfrak{l}/K, 1)$, as defined in \cite[(6.5)]{zha14}, is not congruent to zero modulo $\bar{\mathfrak{p}}_\mathfrak{l}$ (this follows from Gross's formula, cf. \cite[Corollary 6.2]{zha14}). Now, we have the following results (in increasing order of difficulty):
\begin{enumerate}
\item the ($\bar{\mathfrak{p}}_\mathfrak{l}$-adic) valuation of the Tamagawa numbers $t_q(f_\mathfrak{l})$ at primes $q \mid N^+$ is zero by \cref{ass-bd};
\item the valuation of the ratio between the Gross period used to define $L^{alg}(f_\mathfrak{l}/K, 1)$ and the canonical period $\Omega^{can}_{f_\mathfrak{l}}$ equals the sum of the valuations of the Tamagawa numbers at primes dividing $N^-\mathfrak{l}$ \cite[Theorem 6.8]{pw11};
\item assume that either $E$ is ordinary at $p$, or $E$ is supersingular at $p$ and there is $q \mid N^+$ such that the restriction of $\bar{\rho}$ to $\mathrm{Gal}(\bar{\Q}_q/\Q_q)$ is a ramified extension of $\mu$ by $\mu\chi_{cycl}$, where $\mu$ is the non-trivial unramified quadratic character of $\mathrm{Gal}(\bar{\Q}_q/\Q_q)$. Then the following implication holds (Tamagawa numbers at places dividing $N^+$ are irrelevant in view of $(1)$):
\begin{equation}\label{modp-conv}
\mathrm{Sel}_{(\mathfrak{l})}(K, E[p])=0 \Rightarrow \frac{L(f_{\mathfrak{l}}/K, 1)}{\Omega^{can}_{f_\mathfrak{l}}\prod_{q \mid N^- \mathfrak{l}}t_q(f_\mathfrak{l})} \not \equiv 0 \pmod {\bar{\mathfrak{p}}_\mathfrak{l}}.
\end{equation}
In the ordinary case, this follows from work of Kato and Skinner--Urban (cf. \cite[Theorem 7.1]{zha14}, whose third hypothesis is satisfied under our assumptions by \cite[Theorem 1.1]{rib90}). In the supersingular case, \eqref{modp-conv} follows from the work of Fouquet--Wan \cite{fowa22} (Theorem 1.7, Corollary 1.9, and an analogue of Corollary 1.10 for the motives with $\mathrm{Frac}(\mathcal{O}_\mathfrak{l})$-coefficients attached to $f_\mathfrak{l}$ and to its twist by the quadratic character attached to $K$).

As a consequence of (1), (2) and (3), the equivalent conditions of Proposition \ref{nontriv-conv} are satisfied, hence $Z^{BD}\neq Z_0$. Indeed, if $\mathfrak{l}\in \mathcal{A}$ and $\mathrm{Sel}_{(\mathfrak{l})}(K, E[p])=0$ then $\mathfrak{l}\in \mathcal{A}^+$ (by the parity conjecture \cite[Theorem B]{nek13}, or by \cite[Theorem 7.1]{zha14} and \cite[Corollary 1.9]{fowa22}), and by the previous discussion we have $L^{alg}(f_\mathfrak{l}/K, 1)\not \equiv 0 \pmod {\bar{\mathfrak{p}}_\mathfrak{l}}$.
\end{enumerate}

\subsubsection{}\label{weak-bsd} As we have seen above, for $\mathfrak{l} \in \mathcal{A}^+$ the element $z_{\mathfrak{l}}^{BD}$ is related to the central value $L(f_{\mathfrak{l}}/K, 1)$; similarly, for $\mathfrak{l} \in \mathcal{A}^-$ the class $z_\mathfrak{l}^{BD}$ is related to $L'(f_{\mathfrak{l}}/K, 1)$ (by the Gross--Zagier formula for Shimura curves \cite{xzz13}). Thanks to these relations, the abstract results in the previous sections acquire the following arithmetic meaning.
\begin{enumerate}
\item Lemma \ref{lem-selsmall}(1) tells us that, if $\mathfrak{l}\in \mathcal{A}^+$ and $L^{alg}(f_{\mathfrak{l}}/K, 1) \not \equiv 0 \pmod{\bar{\mathfrak{p}}_\mathfrak{l}}$, then $\mathrm{Sel}_{(\mathfrak{l})}(K, E[p])=0$. In particular, if $L(E/K, 1)\neq 0$ then $E(K)$ is finite. A proof of this implication in much greater generality, using this circle of ideas, can be found in \cite{nek12}.

Similarly, it follows from Lemma \ref{lem-selsmall}(2) that, if $\mathrm{ord}_{s=1}L(E/K, s)=1$, then $E(K)$ has rank one, and $\Sh(E/K)[p]=0$ if $z_1^{BD}\neq 0$.
\item Proposition \ref{nontriv-conv} relates the non-triviality of $Z^{BD}$ to converses of the statements in the previous point, which in turn yield various ``converse theorems''. For instance, Proposition \ref{nontriv-conv}(3) implies that, if $E(K)$ has rank one and $\Sh(E/K)$ is finite, then $L'(E/K, 1)\neq 0$, a converse of the theorem of Gross--Zagier and Kolyvagin.

Of course, it would be of interest if one could prove that $Z^{BD}\neq Z_0$ directly, without appealing to the ``mod $p$ converse'' \eqref{modp-conv}.
\item The proof of the implication $(2)\Rightarrow (3)$ in Proposition \ref{nontriv-conv} is essentially an abstract formulation of the argument in \cite[\S 7.2]{zha14}, which is at the heart of Zhang's proof of the non-triviality of Kolyvagin's system of Heegner points. In a nutshell, Zhang reduces the latter to the non-triviality of $Z^{BD}$.
\item Finally, Proposition \ref{gen-sel-abs} yields Theorem \ref{bd-gensel} below.
\end{enumerate}

\begin{thm}\label{bd-gensel}
Let $E/\Q$ be an elliptic curve of squarefree conductor $N$, without complex multiplication. Let $K$ be an imaginary quadratic field such that the prime factors $q\mid N$ are unramified in $K$, and write $N=N^-N^+$, where $q \mid N^-$ (resp. $q \mid N^+$) if $q$ is inert (resp. split) in $K$. Let $p>3$ be a prime not dividing $N\mathrm{disc}(K/\Q)$, and let $r=\mathrm{dim}(\mathrm{Sel}(K, E[p]))$. Assume that $\bar{\rho}: \mathrm{Gal}(\bar{\Q}/\Q)\rightarrow \mathrm{Aut}_{\mathbf{F}_p}(E[p])$ has the following properties:
\begin{enumerate}
\item $\bar{\rho}$ is surjective;
\item if $q \mid N^-$ and $q \equiv \pm 1 \pmod p$ then $\bar{\rho}$ is ramified at $q$;
\item if $q \mid N^+$ then $\bar{\rho}$ is ramified at $q$; furthermore, if $E$ is supersingular at $p$, there is $q \mid N^+$ such that the restriction of $\bar{\rho}$ to $\mathrm{Gal}(\bar{\Q}_q/\Q_q)$ is a ramified extension of $\mu$ by $\mu\chi_{cycl}$, where $\mu$ is the non-trivial unramified quadratic character of $\mathrm{Gal}(\bar{\Q}_q/\Q_q)$.
\end{enumerate}
Then there exists $\mathfrak{l}=\ell_1\ell_2\cdots \ell_r \in \mathcal{A}$ such that $\{z_{\mathfrak{l/\ell_i}}^{BD}, 1 \leq i \leq r\}$ is a basis of $\mathrm{Sel}(K, E[p])$.
\end{thm}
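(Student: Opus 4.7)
The plan is to apply Proposition \ref{gen-sel-abs} directly to the Bertolini--Darmon system $Z^{BD}$: once $Z^{BD}$ is known to be a non-trivial bipartite Kolyvagin system for $E[p]$, that proposition produces the required basis with no extra work. The task therefore splits into two verifications, namely that $Z^{BD}$ is well-defined as a bipartite Kolyvagin system under the hypotheses of the theorem, and that $Z^{BD} \neq Z_0$.

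First I would check that the hypotheses (1)--(3) imply the running assumptions of \cref{constr-bd} and \cref{ass-bd}: squarefreeness of $N$, $p > 3$ coprime to $N\mathrm{disc}(K/\Q)$, absence of CM, surjectivity of $\bar{\rho}$, and ramification of $\bar{\rho}$ at every prime $q \mid N$ with $q \equiv \pm 1 \pmod p$ or $q \mid N^+$. These ensure hypothesis CR of \cite{pw11}, together with the fact that the local conditions defining $\mathrm{Sel}(K, E[p])$ coincide with the mod $\mathfrak{p}_\mathfrak{l}$ Selmer conditions attached to $f_\mathfrak{l}$ for each $\mathfrak{l} \in \mathcal{A}$. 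The construction of $Z^{BD}$ via Heegner/CM points on Shimura sets (for $\mathfrak{l}\in \mathcal{A}^+$) and Shimura curves (for $\mathfrak{l}\in \mathcal{A}^-$), combined with the Jacquet--Langlands correspondence, Ihara's lemma and multiplicity one modulo $p$, then goes through as in \cite{bd05}, \cite{pw11}, with (RL1) and (RL2) being the content of those works.

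For the non-triviality I would run the argument sketched in \cref{zbdntriv}. Repeated application of \cref{bip-set}(2) and \eqref{sel-1} produces some $\mathfrak{l}\in \mathcal{A}$ with $\mathrm{Sel}_{(\mathfrak{l})}(K, E[p]) = 0$. Nekov\'a\v{r}'s parity conjecture \cite{nek13} forces $\mathfrak{l}\in \mathcal{A}^+$. The ramification hypothesis at $q \mid N^+$ annihilates the Tamagawa contributions there, Pollack--Weston's period comparison identifies the Gross period with $\Omega^{can}_{f_\mathfrak{l}}$ up to the Tamagawa factors at $N^-\mathfrak{l}$, and the mod $p$ converse \eqref{modp-conv} --- via Kato and Skinner--Urban in the ordinary case, and via Fouquet--Wan in the supersingular case, where the extra local hypothesis in (3) is precisely what is needed --- yields $L^{alg}(f_\mathfrak{l}/K, 1) \not\equiv 0 \pmod{\bar{\mathfrak{p}}_\mathfrak{l}}$. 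By Gross's formula this is equivalent to $z_\mathfrak{l}^{BD} \neq 0$, so condition (2) of Proposition \ref{nontriv-conv} is met and $Z^{BD} \neq Z_0$.

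The main obstacle is entirely absorbed into this non-triviality statement; it is what forces the technically involved assumption (3) and draws on deep inputs (the Iwasawa main conjecture, the period/Tamagawa analysis, and the parity conjecture), which I would simply quote as black boxes. Once non-triviality is granted, the conclusion is the short linear-algebra argument already carried out in Proposition \ref{gen-sel-abs}: choose admissible primes $\ell_1, \ldots, \ell_r$ with $\mathrm{Sel}_{(\ell_1\cdots\ell_r)}(K, E[p])=0$, observe that $\mathfrak{l}/\ell_i \in \mathcal{A}^-$ by Proposition \ref{par-prop}, and use (RL1), (RL2) together with Lemma \ref{lem-selsmall} to show that the classes $z_{\mathfrak{l}/\ell_i}^{BD}$ lie in $\mathrm{Sel}(K, E[p])$ and map to the standard basis under $\mathrm{loc}_{\ell_1}\oplus\cdots\oplus \mathrm{loc}_{\ell_r}$.
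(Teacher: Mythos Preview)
Your proposal is correct and follows exactly the paper's own proof: the assumptions guarantee that $Z^{BD}$ is defined (via \cref{ass-bd}, \cref{constr-bd}) and non-trivial (via \cref{zbdntriv}), after which Proposition \ref{gen-sel-abs} gives the basis. You have simply unpacked the references in more detail than the paper does, but the logical structure is identical.
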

\begin{proof}
The assumptions ensure that $Z^{BD}$ is defined and non trivial, cf. \cref{constr-bd} and \cref{zbdntriv}; hence, the theorem follows from Proposition \ref{gen-sel-abs}.
\end{proof}

\subsubsection{}\label{crit} A weaker version of the theorem was proved in \cite[Theorem 11.1]{zha14}.

Note that Theorem \ref{bd-gensel} is valid without any restriction on the rank of $E(K)$. If the rank is bigger than one, no general construction of a set of (topological) generators of $\mathrm{Sel}(K, T_p(E))$ is known; on the other hand, Theorem \ref{bd-gensel} tells us that $\mathrm{Sel}(K, E[p])$ is always generated by classes coming from $CM$ points on suitable Shimura curves. However, for $r>1$ the bases $\{z_{\mathfrak{l/\ell_i}}^{BD}, 1 \leq i \leq r\}$ of $\mathrm{Sel}(K, E[p])$ obtained through the theorem are very non-canonical, and their relation with the arithmetic properties of $E$ is mysterious. The Birch and Swinnerton-Dyer conjecture predicts that $\mathrm{ord}_{s=1}L(E/K, s)\leq r$, and equality holds if and only if all the Selmer classes $z_{\mathfrak{l/\ell_i}}^{BD}$ in the statement of the theorem come from $K$-points of $E$. However, our approach offers no way to understand which classes have this property, and it is unclear how to relate the elements $z_{\mathfrak{l/\ell_i}}^{BD}$ to (higher derivatives of) $L(E/K, s)$. For these reasons, the above theorem does not seem to have direct applications to the Birch and Swinnerton-Dyer conjecture. We will instead apply it in the next section to the study of visibility of Tate--Shafarevich classes.

\begin{rem}
In view of the aforementioned relations between the elements $z_\mathfrak{l}^{BD}$ and $L$-values, the reciprocity laws (RL1), (RL2) can be roughly interpreted as algebraic incarnations of the idea, suggested in \cite{maz79}, that congruences of modular forms should correspond to congruences between values of their $L$-functions. We will not attempt to list here the myriad of works related to this topic, but we would like to remind the reader of \cite{joc94}, where Jochnowitz suggests to look at ``examples where the forms are congruent, but the root numbers are opposite [...] This should happen, for example, any time you had a newform of square-free level that was congruent mod $p$ to a form of lower level. It would be interesting to work out some concrete examples, and see if the derivatives come in as we predicted.'' \cite[p. 255]{joc94}. The reciprocity laws for $Z^{BD}$ are a manifestation of the fact that Jochnowitz's visionary prediction was essentially correct.
\end{rem}

\subsubsection{Primitivity of Bertolini--Darmon's bipartite Kolyvagin system}\label{bd-prim} Let us collect further consequences of the non-triviality of $Z^{BD}$, which will not be needed in the sequel of this article but might have independent interest. The construction of $Z^{BD}$ recalled above can be extended (working with modular forms modulo $p^k$ congruent to $f$) to obtain elements in $\Z/p^k\Z$ and $\mathrm{Sel}_{(\mathfrak{l})}(K, E[p^k])$ for $k \geq 1$. See \cite{bd05}, \cite{pw11}; the relevant abstract theory was developed in \cite{how06}. The condition in Proposition \ref{nontriv-conv}(1) can be seen as an analogue in this setting of the notion of primitivity of usual Kolyvagin systems \cite[Definition 4.5.5]{mr04}. Analogously to \cite[Theorem 4.5.6]{mr04}, this property implies that the inequalities of \cite[Theorem 1.5]{tam21} are equalities (cf. also \cite[Theorem 2.5.1]{how06}). For $E/K$ and $p$ as in Theorem \ref{bd-gensel}, this yields the $p$-part of the Birch and Swinnerton-Dyer special value formula in analytic rank zero and one (in the rank one case, the formula was obtained in \cite[Theorem 10.2]{zha14} as a byproduct of the primitivity of the Kolyvagin system of Heegner points); this idea was developed in \cite{bbv16}, \cite{blv}.

Non-triviality of $Z^{BD}$ also implies more precise converse theorems than the one mentioned in \cref{weak-bsd}(2). For instance, let us sketch an argument proving the following implication, assuming that $Z^{BD}$ is non trivial (which is the case under the assumptions of Theorem \ref{bd-gensel}):
\begin{equation}\label{pconv}
\mathrm{Sel}(K, E[p^\infty]) \text{ has } \Z_p\text{-corank one } \Rightarrow L'(E/K, 1)\neq 0.
\end{equation}

Write $\mathrm{Sel}(K, E[p^\infty])=\Q_p/\Z_p\oplus S$ with $S=T\oplus T$ finite. By the Gross--Zagier formula for Shimura curves, it suffices to prove that the class $z_{1, k}^{BD} \in \mathrm{Sel}(K, E[p^{k}])$ coming from the divisor considered in \cref{constr-bd}(2) for $\mathfrak{l}=1$ is non-zero for some $k\geq 1$. Let us fix $k$ larger than twice the length $l(T)$ of $T$. In the rest of this subsection, we will work with Bertolini--Darmon's bipartite Kolyvagin system modulo $p^{k}$, whose elements will be denoted by $z_{\mathfrak{l}, k}^{BD}$. As in \cite{how06} and \cite{tam21}, the elements $z_{\mathfrak{l}, k}^{BD}$ are obtained as reductions modulo $p^k$ of the analogous elements attached to $\mathfrak{l}\in \mathcal{A}_{2k}$ squarefree product of primes $\ell \nmid pN$ inert in $K$ such that $\ell \not \equiv \pm 1 \pmod p$ and $a_\ell \equiv \pm(\ell+1) \pmod{p^{2k}}$. For every $\mathfrak{l}\in \mathcal{A}_{2k}$ with an even number of prime factors we have an isomorphism $\mathrm{Sel}_{(\mathfrak{l})}(K, E[p^k])\simeq \Z/p^k\Z\oplus T_\mathfrak{l}\oplus T_\mathfrak{l}$ (cf. \cite[Proposition 2.2.7, Corollary 2.2.10]{how06}). We will prove by induction on $l(T_{\mathfrak{l}})$ that, for such $\mathfrak{l}$, if $l(T_{\mathfrak{l}})\leq l(T)$ then $z_{\mathfrak{l}, k}^{BD}\neq 0$. 

Choose (using \cite[Theorem 3.2]{bd05}) $\ell_1 \in \mathcal{A}_{2k}$ not dividing $\mathfrak{l}$ such that $\mathrm{loc}_{\ell_1}(\mathrm{Sel}_{(\mathfrak{l})}(K, E[p^{k}]))=\Z/p^{k}\Z$, so that $\mathrm{Sel}_{(\mathfrak{l}\ell_1)}(K, E[p^{k}])\simeq T_\mathfrak{l} \oplus T_\mathfrak{l}$ by global duality. If $T_\mathfrak{l}=0$ then $z_{\mathfrak{l}\ell_1, k}^{BD}\neq 0$, so $z_{\mathfrak{l}, k}^{BD}\neq 0$ by the second reciprocity law. Otherwise, let $p^t$ be the exponent of $T_\mathfrak{l}$ and write $T_{\mathfrak{l}}=\Z/p^t\Z\oplus T'$. Choose $\ell_2\in \mathcal{A}_{2k}$ not dividing $\mathfrak{l}\ell_1$ such that $\mathrm{loc}_{\ell_2}$ is injective on $\Z/p^t\Z$, so that $\mathrm{loc}_{\ell_2}(\mathrm{Sel}_{(\mathfrak{l}\ell_1)}(K, E[p^{k}]))\simeq \Z/p^t\Z$. The analogue modulo $p^{k}$ of the diagram in \cref{sel-gps} (cf. \cite[Proposition 2.2.9]{how06}) implies that $\mathrm{Sel}_{(\mathfrak{l}\ell_1\ell_2)}(K, E[p^{k}])\simeq \Z/p^{k}\Z\oplus T'\oplus T'$. Hence $T'\simeq T_{\mathfrak{l}\ell_1\ell_2}$, and $z_{\mathfrak{l}\ell_1\ell_2, k}^{BD}\neq 0$ by the inductive hypothesis.

We will show that $\mathrm{loc}_{\ell_2}(z_{\mathfrak{l}\ell_1\ell_2, k}^{BD})\neq 0$, which implies that $z_{\mathfrak{l}\ell_1, k}^{BD}\neq 0$ and $z_{\mathfrak{l}, k}^{BD}\neq 0$ by the reciprocity laws, concluding the proof. We argue by contradiction. Let $e\geq 0$ be the greatest integer such that $z_{\mathfrak{l}\ell_1\ell_2, k}^{BD}\in p^e\mathrm{Sel}_{(\mathfrak{l}\ell_1\ell_2)}(K, E[p^k])$. The proof of \cite[Theorem 8.3]{tam21} shows (under our assumption that $Z^{BD}\neq Z_0$) that $e$ equals $l(T')$; furthermore, $z_{\mathfrak{l}\ell_1\ell_2, k}^{BD}$ is contained in a submodule of $\mathrm{Sel}_{(\mathfrak{l}\ell_1\ell_2)}(K, E[p^k])$ isomorphic to $\Z/p^k\Z$ (cf. \cite[Lemma 3.3.6]{how06}, \cite[Proposition 7.9]{tam21}). It follows that, if $\mathrm{loc}_{\ell_2}(z_{\mathfrak{l}\ell_1\ell_2, k}^{BD})=0$, then $\mathrm{loc}_{\ell_2}(\mathrm{Sel}_{(\mathfrak{l}\ell_1\ell_2)}(K, E[p^k]))$ is killed by $l(T')$, hence by $l(T_{\mathfrak{l}})$. This contradicts global duality, because $\mathrm{loc}_{\ell_2}(\mathrm{Sel}_{(\mathfrak{l}\ell_1)}(K, E[p^k]))$ is also killed by $l(T_{\mathfrak{l}})$ and $2l(T_{\mathfrak{l}})<k$.

In the ordinary case, the implication \eqref{pconv} was obtained, more indirectly, in \cite[Theorem 1.3]{zha14} as a consequence of the non-triviality of the Kolyvagin system of Heegner points and of Kolyvagin's structure theorem for the Selmer group \cite{kol91}. In the supersingular case, \eqref{pconv} was proved via different methods (and under different assumptions) in \cite[Theorem B]{cw}.

\subsubsection{Addendum to \cite{tam21}} In this paragraph, let $K/F$ be a quadratic $CM$ extension of a totally real field. As mentioned above, in \cite{tam21} a version of Bertolini--Darmon's bipartite Kolyvagin system is used to give upper bounds on the size of Selmer groups of Hilbert modular forms $f$ of parallel weight two and analytic rank at most one over $K$. For modular elliptic curves $E$ over $F$, these bounds are a step towards the $p$-part of the Birch and Swinnerton-Dyer special value formula \eqref{svf} for $E/K$. For instance, suppose that $L(E/K, 1)\neq 0$. As discussed in \cite[Remarks 5.5, 5.6]{tam21}, if one knows that the relevant inequality in \cite[Theorem 1.5]{tam21} is an equality, a comparison between the period $P(E/K)$ and the one appearing in the special value formula of \cite[Theorem 7.1]{zha04} is required to deduce the $p$-part of \eqref{svf}. This comparison is known under suitable assumptions for $F=\Q$ \cite[Theorem 6.8]{pw11}, but we would like to clarify that it appears to be much deeper for general $F$. For instance, for $F$ of even degree and $E$ with good reduction everywhere one needs to show that the quotient of the period in \cite[Theorem 7.1]{zha04} by the product of the archimedean periods of $E/K$ is a $p$-adic unit. This is closely related to conjectures of Ichino--Prasanna \cite[Conjectures A, C]{ip21} (whose rational, Hodge-theoretic counterpart is Oda's conjecture \cite[p. xii]{od82}).

\begin{rem}\label{termin}
We conclude this section with a brief remark on our choice of notation and terminology. We denoted bipartite Kolyvagin systems by $Z$ because of the relation with zeta elements which is apparent in the example of $Z^{BD}$. Furthermore, we chose the name ``bipartite \emph{Kolyvagin} system'' instead of the more common ``bipartite \emph{Euler} system'' because $Z^{BD}$ resembles more a classical Kolyvagin system than an Euler system. First of all, as already pointed out in the introduction of \cite{bd05}, there are no Kolyvagin derivatives in the definition of $Z^{BD}$: ramification of the classes $z_{\mathfrak{l}}^{BD}\in H^1(K, E[p])$ at places dividing $\mathfrak{l}$ comes from bad reduction of the Shimura curves used to construct $z_{\mathfrak{l}}^{BD}$, rather than from ramification of suitable field extensions of $K$ as in Kolyvagin's construction of the Euler system of Heegner points. In addition, let us point out that the structure of $\mathrm{Sel}(K, E[p^\infty])$ can be described in terms of invariants attached to systems of elements (modulo $p^k$ for arbitrary $k$) satisfying generalizations of (RL1) and (RL2), as in \cite[\S 4]{ki24}. This is akin to the information one can obtain from Kolyvagin systems, cf. for instance \cite[Proposition 4.5.8]{mr04}.

Note that the fact that $Z^{BD}$ is not constructed as a ``Kolyvagin derivative of an Euler system'' appears to be crucial for several purposes. Firstly, it ensures that all the classes in $Z^{BD}$ are directly related to special values of $L$-functions, which in turn allows to establish the non-triviality of $Z^{BD}$ (a similar remark can be found in \cite[\S 1]{how06}). Secondly, the fact that all the cohomology classes in $Z^{BD}$ directly come from points will be essential for our applications to visibility below.

The above discussion leads us to wonder whether there are other ways to construct directly objects behaving like Kolyvagin systems.
\end{rem}

\section{Background on visibility}\label{back-vis}

\subsection{Visibility} Let $K$ be a number field, $E/K$ an elliptic curve and $\iota: E \rightarrow A$ an embedding of $E$ into an abelian variety $A$ (over $K$). A class $c \in H^1(K, E)$ is visible in $A$ (via $\iota$) if $\iota(c)=0\in H^1(K, A)$. In other words, denoting by $\gamma: A \rightarrow B=A/E$ the quotient map, the class $c$ is visible in $A$ if it is the image of a point $P \in B(K)$ via the connecting map $B(K)\rightarrow H^1(K, E)$. Concretely, this means that $c$ corresponds to the $E$-torsor $\gamma^{-1}(P)\subset A$.

Let us start by recalling some basic facts about visibility.

\begin{lem}\label{lem-basic-vis}
For every $c\in H^1(K, E)$, the following assertions hold true.
\begin{enumerate}
\item Let $L/K$ be a finite extension such that $\mathrm{res}_{K}^L(c)=0 \in H^1(L, E)$. The class $c$ is visible in an abelian variety of dimension $[L: K]$.
\item Let $m \geq 1$ be an integer. If $c \in \Sh(E/K)[m]$ then $c$ is visible in an abelian variety of dimension at most $m$.
\end{enumerate}
\end{lem}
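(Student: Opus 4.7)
The plan is to prove (1) by constructing an explicit abelian variety of the required dimension, and to reduce (2) to (1) by exhibiting a splitting field of $c$ of sufficiently small degree.

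For (1), I would take $A := \mathrm{Res}_{L/K}(E_L)$, the Weil restriction of $E_L$ along $L/K$. It is an abelian variety of dimension exactly $[L:K]$ and comes with a natural closed embedding $\iota: E \hookrightarrow A$, the unit of the adjunction between Weil restriction and base change. Shapiro's lemma identifies $H^1(K, A)$ with $H^1(L, E)$ in such a way that $\iota_*: H^1(K, E) \to H^1(K, A)$ corresponds to the restriction map $\mathrm{res}_K^L$. Hence the hypothesis $\mathrm{res}_K^L(c) = 0$ says exactly that $c$ is visible in $A$.

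For (2), by (1) it suffices to produce an extension $L/K$ with $[L:K] \leq m$ such that $\mathrm{res}_K^L(c) = 0$. Let $X/K$ be the $E$-torsor representing $c$, a smooth projective curve of genus one; then $c|_L = 0$ if and only if $X(L) \neq \emptyset$. Since $c \in \Sh(E/K)$ has period $n$ dividing $m$, Cassels' theorem on period equals index for classes in $\Sh$ yields a $K$-rational divisor on $X$ of degree $n$. Riemann--Roch on the genus one curve $X$ then allows me to replace this by a linearly equivalent effective divisor of the same degree $n$; decomposing it into closed points, at least one closed point $P \in X$ must have residue field $L := K(P)$ of degree $[L:K] \leq n \leq m$, which gives the sought-after extension.

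The substantive input is Cassels' period-equals-index theorem for $\Sh$; without it, merely lifting $c$ via the Kummer sequence to a class in $H^1(K, E[m])$ attached to an étale $E[m]$-torsor only produces a splitting extension of degree up to $m^2$, which would yield visibility in dimension $m^2$ rather than $m$. Part (1), by contrast, is essentially formal once the Weil restriction/Shapiro's lemma dictionary is in place.
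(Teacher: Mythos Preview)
Your proposal is correct and follows essentially the same route as the paper. For (1) the paper likewise takes $A=\mathrm{Res}_{L/K}E_L$, and for (2) it invokes Cassels' result that a class in $\mathrm{Sel}(K,E[m])$ is represented by a torsor carrying an effective $K$-rational divisor of degree $m$ (equivalently, your period-equals-index formulation), from which one extracts a closed point of degree at most $m$ and applies (1).
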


\begin{proof} \leavevmode
\begin{enumerate}
\item The class $c$ is visible in $A=\mathrm{Res}_{L/K}E_L$, as explained in the proof of \cite[Proposition 1.3]{ast02}.
\item This is \cite[Proposition 2.4]{ast02}. The point is that $c$ is in the image of $\mathrm{Sel}(K, E[m])$, whose elements give rise to $E$-torsors with an effective divisor of degree $m$, as observed by Cassels (cf. the proof of \cite[Theorem 1.3]{cas62}).
\end{enumerate}
\end{proof}

\subsection{Visibility and congruences}\label{vis-cong} Let $\iota : E \rightarrow A$ be an embedding. By Poincaré's reducibility theorem, we can choose an abelian subvariety $E^\perp\subset A$ such that there is a short exact sequence
\begin{equation*}
0 \rightarrow E \cap E^\perp \rightarrow E \times E^\perp \rightarrow A \rightarrow 0
\end{equation*}
where $E \cap E^\perp$ is finite over $K$ and is embedded anti-diagonally in $E \times E^\perp$. It follows that, if $c \in H^1(K, E)$ is visible in $A$, then $(c, 0)$ is in the image of $H^1(K, E \cap E^\perp)$ (in particular, $c$ is killed by the exponent of $E \cap E^\perp$). Reversing the argument yields a classical construction which allows to produce visible classes (cf. \cite[\S 3]{cm00}); we will make use of a variation of it in the next section. Let us illustrate the construction first in the simplest example: suppose that $F$ is an elliptic curve, $p$ is a prime and $E[p]\simeq F[p]$ is an isomorphism. Let $A=(E\times F)/E[p]$, where the embedding is anti-diagonal; the following diagram is commutative:
\begin{center}
\begin{tikzcd}
E[p] \arrow[r] \arrow[d]
& F \arrow[d] \\
E \arrow[r]
& A.
\end{tikzcd}
\end{center}
Let $P \in F(K)$, and let $c \in H^1(K, F[p])$ be its image via the connecting map $F(K)/pF(K)\rightarrow H^1(K, F[p])$. Then the image of  $c$ in $H^1(K, E)$ is visible in $A$. In particular, if $c$ belongs to $\mathrm{Sel}(K, E[p])$, then it yields a visible class in $\Sh(E/K)[p]$.

\subsubsection{Visibility of $\Sh(E/K)[3]$ in abelian surfaces}\label{maz-3} A first non-trivial application of the above construction is Mazur's theorem \cite{maz99} asserting that every class in $\Sh(E/K)[3]$ is visible in an abelian surface (note that Lemma \ref{lem-basic-vis} only guarantees that such a class is visible in an abelian threefold). Mazur shows that, for every $c \in \mathrm{Sel}(K, E[3])$, there is an elliptic curve $F$ with an isomorphism $F[3]\simeq E[3]$ such that $c$ goes to zero in $H^1(K, F)$. To do so, Mazur's idea in a nutshell is to consider the moduli space $\mathcal{S}(E, c)$ of elliptic curves $F$ together with an isomorphism $E[3]\simeq F[3]$, and a section of an $F$-torsor representing the image of $c$ in $H^1(K, F)$. As $c \in \mathrm{Sel}(K, E[3])$, the variety $\mathcal{S}(E, c)$ has a point over every completion of $K$, and one needs to show that it has a $K$-point. The key input is the fact that $\mathcal{S}(E, c)/\bar{K}$ is birationally equivalent to $\mathbf{P}^2$, which allows to apply the local-global principle (to a suitable Brauer--Severi variety constructed from $\mathcal{S}(E, c)$).

\subsubsection{} Note that only for small $n$ we can have many elliptic curves $F/K$ such that $F[n]\simeq E[n]$ (the moduli space of such $F$ is a form of the modular curve of full level $n$, whose genus increases); this suggests that classes in $\Sh(E/K)[n]$ should not, in general, be visible in abelian surfaces for $n$ large. Explicit examples for $n=6, 7$ are given in \cite{fis14}.

\subsection{Visibility in modular abelian varieties}\label{vis-modav} Let us now restrict to $K=\Q$. Let $E/\Q$ be an elliptic curve of conductor $N$; fix a modular parametrization $\pi: X_0(N) \rightarrow E$ and assume that the induced map $E\rightarrow J_0(N)=\mathrm{Jac}(X_0(N))$ is injective (such an $(E, \pi)$ is called optimal). With the notation of \cref{vis-cong}, we can take $E^\perp$ to be the kernel of the map $J_0(N)\rightarrow E$ induced by $\pi$, and we deduce that classes $c\in H^1(K, E)$ which are visible in $J_0(N)$ are killed by the modular degree, i.e. the degree of $\pi$ (cf. \cite[p. 20]{cm00}). It is not always the case that $\Sh(E/\Q)$ is killed by the modular degree, hence Tate--Shafarevich classes are not visible in $J_0(N)$ in general (cf. \cite[\S 4]{cm00}).

\subsubsection{Visibility at higher level} It is natural to wonder whether classes in $\Sh(E/\Q)$ which are not visible in $J_0(N)$ become visible in $J_0(M)$ for some multiple $M$ of $N$ (the analogous phenomenon of capitulation of ideal classes of abelian number fields in cyclotomic fields is studied in \cite{sw10}). We will be interested in a variation of this question, explained in the next paragraph. An example of visibility of Tate--Shafarevich classes at higher level is given in \cite[\S 4.2]{ast02}, where the authors exhibit an elliptic curve of conductor 5389  with Tate--Shafarevich classes invisible in $J_0(5389)$, but visible in $J_0(7\cdot 5389)$; as explained before Proposition 4.2 of \emph{loc. cit.}, the construction is related to level raising.

\subsubsection{Modularity} As in \cite[\S 2]{js07}, by a $J_0$-modular abelian variety we mean an abelian variety over $\Q$ which is a quotient of $J_0(M)$ for some $M \geq 1$.  We say that a class $c\in \Sh(E/\Q)$ is modular if there is an embedding $\iota: E \rightarrow A$ of $E$ into a $J_0$-modular abelian variety $A$ such that $c$ is visible in $A$. The following is a special case of \cite[Conjecture 7.1.1]{js07}.

\begin{conj}(cf. \cite[Conjecture 7.1.1]{js07})\label{mod-sha-conj}
Every class in $\Sh(E/\Q)$ is modular.
\end{conj}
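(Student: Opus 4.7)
The plan is to extend the paper's strategy—which via Theorem \ref{bd-gensel} handles $\Sh(E/\Q)[p]$ under specific hypotheses—to the full Tate--Shafarevich group. First I would decompose $\Sh(E/\Q) = \bigoplus_p \Sh(E/\Q)[p^\infty]$ and verify that modularity combines across primary components: given $c = \sum_p c_p$ with each $c_p$ visible in a $J_0$-modular abelian variety $A_p$ containing $E$, the pushout of the $A_p$ along the shared copy of $E$ contains $E$, visibilizes $c$, and remains a quotient of a suitable $J_0(M)$ (using that each newform appears with arbitrarily large multiplicity in $J_0(M)$ for $M$ with enough prime factors). Next I would reduce modularity of $\Sh(E/\Q)[p^\infty]$ to modularity of $\Sh(E/\Q)[p]$ by induction on the exponent: if $c \in \Sh(E/\Q)[p^n]$ with $n \geq 2$, then $pc \in \Sh(E/\Q)[p^{n-1}]$ is modular by induction, and one would lift this to $c$ using the bipartite Kolyvagin system modulo $p^k$ of \cref{bd-prim} together with the geometric (Heegner-type) origin of its classes.

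For each fixed prime $p$, I would follow the paper's own approach: choose an imaginary quadratic field $K$ so that the hypotheses of Theorem \ref{bd-gensel} hold and so that the restriction $H^1(\Q, E[p]) \to H^1(K, E[p])$ is injective on the classes of interest (e.g.\ by choosing $K$ with $E^{K}(\Q)[p] = 0$, splitting $\Sh(E/\Q)[p]$ off from $\Sh(E/K)[p]$ via the $\mathrm{Gal}(K/\Q)$-action). Then each $c \in \Sh(E/\Q)[p]$ yields a Selmer class in $\mathrm{Sel}(K, E[p])$, which by Theorem \ref{bd-gensel} is an $\mathbf{F}_p$-linear combination of Bertolini--Darmon classes $z_{\mathfrak{l}/\ell_i}^{BD}$. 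Each such class comes from a $K$-point of the modular abelian variety $A_{\mathfrak{l}/\ell_i}$ with $E[p] \subset A_{\mathfrak{l}/\ell_i}$ by level raising (cf.\ \cref{levrais}), so by the construction of \cref{vis-cong} its image in $H^1(K, E)$ is visible in $(E \times A_{\mathfrak{l}/\ell_i})/E[p]$. Taking the pushout along $E$ of these visibilizing varieties and descending from $K$ to $\Q$ via the quadratic twist decomposition yields a $J_0$-modular abelian variety over $\Q$ in which $c$ is visible.

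The main obstacle, and the reason this remains a conjecture, is removing the hypotheses of Theorem \ref{bd-gensel}: surjectivity and prescribed ramification of $\bar\rho$, squarefree conductor, $p > 3$, good ordinary or specific supersingular reduction at $p$, and absence of complex multiplication. These assumptions enter crucially in establishing the non-triviality of $Z^{BD}$, which currently hinges on the mod-$p$ converse \eqref{modp-conv} and hence on Skinner--Urban's divisibility in the Iwasawa main conjecture together with work of Fouquet--Wan; eliminating them unconditionally is well beyond present techniques. The CM case, the primes $p = 2, 3$, and curves with small mod-$p$ image each require separate methods not furnished by bipartite Kolyvagin systems—for instance isogeny-class variations at small primes, or entirely different visibility constructions in the CM case. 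I expect these unconditional extensions to be the serious obstruction; the reductions in the first paragraph and the visibility argument in the second are relatively formal once the constrained $p$-torsion case is in hand for every prime $p$.
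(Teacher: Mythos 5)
The statement in question is Conjecture \ref{mod-sha-conj}, a restatement of Jetchev--Stein's conjecture; the paper does \emph{not} prove it, and offers only Theorem \ref{mainthm} and Corollary \ref{cor-jsconj} (which handle $\Sh(E/\Q)[p]$ for a single good odd prime $p$, under hypotheses on $\bar{\rho}$, the conductor and the reduction type) as new evidence. You correctly recognize this and frame your proposal as a reduction to those special cases plus an obstruction list, which is the right posture. Your middle paragraph tracks the paper's actual argument reasonably well; note, though, that the descent from $K$ to $\Q$ is simpler than the ``quadratic twist decomposition'' you sketch. Corollary \ref{cor-jsconj} merely observes that $\Sh(A/\Q)[p] \rightarrow \Sh(A/K)[p]$ is injective because $[K:\Q]=2$ is prime to $p>3$, so once the image of $c\in\Sh(E/\Q)[p]$ in $\Sh(A/\Q)$ vanishes after restriction to $K$, it already vanishes over $\Q$; there is no need to split $\Sh(E/K)[p]$ by the $\mathrm{Gal}(K/\Q)$-action.

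Where I would push back is the assertion that the reductions in your first paragraph are ``relatively formal once the constrained $p$-torsion case is in hand.'' The cross-prime combination is plausible: visibility does compose under the pushout $(A_p\times A_q)/E$ with $E$ embedded anti-diagonally, and $J_0$-modularity can be retained with care since each $A_p$ is built from newforms. But the reduction from $\Sh(E/\Q)[p^\infty]$ to $\Sh(E/\Q)[p]$ by induction on the exponent is a genuine missing idea, not a formality. If $c$ has exact order $p^n$ with $n\geq 2$ and $pc$ is visible in some $A\supset E$, a $K$-point of $A/E$ lifting $pc$ gives you no handle on $c$ itself. What you would actually need is a mod-$p^n$ analogue of Theorem \ref{bd-gensel}, i.e.\ that $\mathrm{Sel}(K, E[p^n])$ is generated (as a $\Z/p^n\Z$-module) by classes coming from points on modular abelian varieties. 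The paper proves no such thing: the discussion of the mod-$p^k$ system in \cref{bd-prim} concerns primitivity and special-value formulas, not a generating set, and since $\mathrm{Sel}(K, E[p^n])$ is generally not free over $\Z/p^n\Z$, the dimension-counting behind Proposition \ref{gen-sel-abs} does not carry over. This is a second, independent obstruction to Conjecture \ref{mod-sha-conj} beyond the hypotheses you list in your third paragraph, and it deserves to be named as such rather than dismissed as formal.
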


\begin{rem}
As pointed out in \cite[Remarks 7.1.2, 7.1.3]{js07}, the following more precise problems would also be worth investigating; they might offer a way to relate the previous conjecture to the conjectural finiteness of $\Sh(E/\Q)$ (note that classes in $\Sh(E/\Q)$ which are visible via a fixed $\iota: E \rightarrow A$ form a finite group by the Mordell--Weil theorem).
\begin{enumerate}
\item Study the (minimal) levels of the $J_0$-modular abelian varieties where the classes in $\Sh(E/\Q)$ become visible.
\item Given a prime $\ell \nmid N$, the two degeneracy maps $X_0(N\ell)\rightarrow X_0(N)$ induce maps $J_0(N)\rightarrow J_0(N\ell)$; repeating the argument we get maps $J_0(N)\rightarrow J_0(N\ell_1\cdots \ell_i)$ for every product of distinct primes not dividing $N$. For $E\subset J_0(N)$, determine which classes in $\Sh(E/\Q)$ are visible in $J_0(N\ell_1\cdots \ell_i)$ via (linear combinations of) such maps.
\end{enumerate}
\end{rem}

\subsubsection{Known results} The class $0\in \Sh(E/\Q)$ is modular, because $E$ is modular. Besides this, in \cite{js07} the authors give the following theoretical evidence supporting Conjecture \ref{mod-sha-conj}.
\begin{enumerate}
\item Classes in $\Sh(E/\Q)$ of order $2$ or $3$ are modular \cite[Proposition 7.2.1]{js07}, because they are visible in an abelian surface (by Lemma \ref{lem-basic-vis} and \cref{maz-3}).
\item By \cite[Proposition 7.2.2]{js07}, classes in $\Sh(E/\Q)$ which split over an abelian extension of $\Q$ are visible in quotients of $\mathrm{Jac}(X_1(M)), M \geq 1$.
\end{enumerate}

To conclude this section, we would like to mention that the relations between visibility, the Birch and Swinnerton-Dyer special value formula, and congruences of modular forms, at the heart of the results presented below, were explored in a series of articles by Agashe, cf. for instance \cite{aga10}.

\section{Modularity of $\Sh(E/\Q)[p]$}\label{mod-sha-sect}

\subsection{Statements} The aim of this section is to prove new instances of Conjecture \ref{mod-sha-conj}, as in Corollary \ref{cor-jsconj} below. Our main result is the following.
\begin{thm}\label{mainthm}
Let $E/\Q$ be an elliptic curve of squarefree conductor $N$, without complex multiplication. Let $K$ be an imaginary quadratic field such that the prime factors $q\mid N$ are unramified in $K$, and write $N=N^-N^+$, where $q \mid N^-$ (resp. $q \mid N^+$) if $q$ is inert (resp. split) in $K$. Let $p>3$ be a prime not dividing $N\mathrm{disc}(K/\Q)$. Assume that $\bar{\rho}: \mathrm{Gal}(\bar{\Q}/\Q)\rightarrow \mathrm{Aut}_{\mathbf{F}_p}(E[p])$ has the following properties:
\begin{enumerate}
\item $\bar{\rho}$ is surjective;
\item if $q \mid N^-$ and $q \equiv \pm 1 \pmod p$ then $\bar{\rho}$ is ramified at $q$;
\item if $q \mid N^+$ then $\bar{\rho}$ is ramified at $q$; furthermore, if $E$ is supersingular at $p$, there is $q \mid N^+$ such that the restriction of $\bar{\rho}$ to $\mathrm{Gal}(\bar{\Q}_q/\Q_q)$ is a ramified extension of $\mu$ by $\mu\chi_{cycl}$, where $\mu$ is the non-trivial unramified quadratic character of $\mathrm{Gal}(\bar{\Q}_q/\Q_q)$.
\end{enumerate}
Then there is a $J_0$-modular abelian variety $A/\Q$ and an embedding $\iota: E \rightarrow A$ such that every $c\in \Sh(E/K)[p]$ is visible in $A\times_\Q K$ via $\iota \times_\Q K$.
\end{thm}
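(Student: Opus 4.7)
The plan is to combine Theorem \ref{bd-gensel}, which provides a basis of $\mathrm{Sel}(K, E[p])$ made of Bertolini--Darmon classes, with the visibility construction recalled in \cref{vis-cong} and already hinted at in the introduction: for $\mathfrak{l} \in \mathcal{A}^-$, the class $z_{\mathfrak{l}}^{BD}$ is the image in $H^1(K, E[p])$ of a $K$-point on a modular abelian variety $A_{\mathfrak{l}}$ equipped with a $\Q$-rational embedding $E[p] \hookrightarrow A_{\mathfrak{l}}$. Since the subset of $H^1(K, E)$ killed by a fixed map $H^1(K, E) \to H^1(K, A_K)$ is a subgroup, it suffices to construct a single $A$ in which each element of a generating set of $\mathrm{Sel}(K, E[p])$ becomes visible; every Selmer class, and in particular every $c \in \Sh(E/K)[p]$, will then be visible in $A_K$ via $\iota_K$.

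First I would apply Theorem \ref{bd-gensel} to produce $\mathfrak{l} = \ell_1 \cdots \ell_r$ such that $\{z_{\mathfrak{l}/\ell_i}^{BD}\}_{i=1}^r$ is a basis of $\mathrm{Sel}(K, E[p])$; each $\mathfrak{l}/\ell_i$ lies in $\mathcal{A}^-$ (as in the proof of Proposition \ref{gen-sel-abs}), so \cref{constr-bd}(2) applies. By Jacquet--Langlands, the Shimura curve divisor defining $z_{\mathfrak{l}/\ell_i}^{BD}$ gives a $K$-point $P_i$ on the modular abelian quotient $A_i$ of $J_0(N \cdot \mathfrak{l}/\ell_i)$ cut out by the newform $f_{\mathfrak{l}/\ell_i}$; the congruence $\bar{\rho}_{f_{\mathfrak{l}/\ell_i}} \simeq \bar{\rho}$ together with the irreducibility of $\bar{\rho}$ produces a $\Q$-rational closed immersion $\phi_i: E[p] \hookrightarrow A_i$, and $z_{\mathfrak{l}/\ell_i}^{BD}$ is, up to a non-zero scalar, the image of $P_i$ under the connecting map of $0 \to E[p] \to A_i \to A_i/\phi_i(E[p]) \to 0$.

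Next I set
\begin{equation*}
A := (E \times A_1 \times \cdots \times A_r)/H, \qquad H := \bigl\{(x_0, \phi_1(x_1), \ldots, \phi_r(x_r)) : x_j \in E[p],\ x_0 + x_1 + \cdots + x_r = 0\bigr\},
\end{equation*}
with $\iota: E \to A$ induced by inclusion in the first factor. Then $\iota$ is a closed immersion (since $H \cap (E \times 0 \times \cdots \times 0) = 0$) and $A$ is $J_0$-modular, being a quotient of a product of $J_0$-modular abelian varieties; concretely, this follows from the formalism of \cite[\S 2]{js07} via the degeneracy maps $X_0(N \cdot \mathfrak{l}) \to X_0(N \cdot \mathfrak{l}/\ell_i)$ and $X_0(N \cdot \mathfrak{l}) \to X_0(N)$. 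A direct diagram chase with the short exact sequence $0 \to H \to E \times \prod_j A_j \to A \to 0$ shows that the image in $A(K)$ of $P_i$, viewed as a point in the $i$-th $A_j$-slot, maps under the connecting homomorphism to a class in $H^1(K, E[p])$ whose pushforward to $H^1(K, E)$ along $\iota_*$ recovers the image of $z_{\mathfrak{l}/\ell_i}^{BD}$, up to a non-zero scalar.

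The main obstacle will be the bookkeeping in the last diagram chase, which rests on the explicit construction of the embedding $\phi_i: E[p] \hookrightarrow A_i$ and on verifying that the Heegner divisor on the Shimura curve indeed transports, under Jacquet--Langlands, to a $K$-point of $A_i$ whose connecting image realizes $z_{\mathfrak{l}/\ell_i}^{BD}$; this requires unpacking the details of \cref{constr-bd}(2). The $J_0$-modularity of $A$ is a more routine check but also merits an explicit argument via the degeneracy maps above, rather than being invoked as a black box.
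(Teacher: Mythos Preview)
Your strategy coincides with the paper's: apply Theorem~\ref{bd-gensel}, show each basis class $z_{\mathfrak{l}/\ell_i}^{BD}$ is visible in $(E\times A_i)/E[p]$ for a suitable modular $A_i$, and glue into a single $A$ (a quotient of $J_0(N\mathfrak{l})$) via the anti-diagonal embedding of $E[p]^r$.

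The one place where your sketch understates the difficulty is the step you label ``bookkeeping''. The congruence $\bar\rho_{f_{\mathfrak{l}/\ell_i}}\simeq\bar\rho$ does \emph{not} by itself yield a closed immersion $E[p]\hookrightarrow A_i$ compatible with the definition of $z_{\mathfrak{l}/\ell_i}^{BD}$. Recall that $z_{\mathfrak{l}/\ell_i}^{BD}$ is built from the isomorphism $J_{\mathfrak{l}/\ell_i}[p]/\mathfrak m_{\mathfrak{l}/\ell_i}\simeq E[p]$ on the Shimura-curve Jacobian; to transport this to a copy of $E[p]$ inside a modular abelian variety $A_i$ one needs a non-zero map $J_{\mathfrak{l}/\ell_i}[p]/\mathfrak m_{\mathfrak{l}/\ell_i}\to A_i[\bar{\mathfrak p}_{\mathfrak{l}/\ell_i}]$ sitting in a commutative square with the two Kummer maps. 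The naive quotient $J_{\mathfrak{l}/\ell_i}/I_{\mathfrak{l}/\ell_i}$ only carries an action of the possibly non-maximal order $\mathcal O_{\mathfrak{l}/\ell_i}$, and $\mathfrak p_{\mathfrak{l}/\ell_i}$ need not be principal, so there is no evident ``$\mathfrak p$-torsion'' and no isogeny playing the role of multiplication by $\mathfrak p$. The paper isolates this as a separate proposition and resolves it by passing to an isogenous $A_i$ with an action of the integral closure $\bar{\mathcal O}_{\mathfrak{l}/\ell_i}$, choosing the surjection $\varphi:J_{\mathfrak{l}/\ell_i}\to A_i$ so that $T_p(\varphi)$ does not land in $\bar{\mathfrak p}_{\mathfrak{l}/\ell_i}T_p(A_i)$, and then composing $\varphi$ on $p$-torsion with multiplication by $\alpha^{e-1}$ (for $\alpha\in\bar{\mathcal O}_{\mathfrak{l}/\ell_i}$ a uniformizer at $\bar{\mathfrak p}_{\mathfrak{l}/\ell_i}$, with $p\bar{\mathcal O}_{\mathfrak{l}/\ell_i}\subset\bar{\mathfrak p}_{\mathfrak{l}/\ell_i}^e$). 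This is the substantive content behind your $\phi_i$; once it is in place, the rest of your outline goes through verbatim.
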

Before discussing the proof of the theorem, let us deduce a similar result over $\Q$.
\begin{corol}\label{cor-jsconj}
Let $E/\Q$ be an elliptic curve of squarefree conductor $N$, without complex multiplication. Let $p>3$ be a prime not dividing $N$. Assume that $\bar{\rho}: \mathrm{Gal}(\bar{\Q}/\Q)\rightarrow \mathrm{Aut}_{\mathbf{F}_p}(E[p])$ has the following properties:
\begin{enumerate}
\item $\bar{\rho}$ is surjective;
\item if $q \mid N$ and $q \equiv \pm 1 \pmod p$ then $\bar{\rho}$ is ramified at $q$;
\item if $E$ is supersingular at $p$, there is $q \mid N$ such that the restriction of $\bar{\rho}$ to $\mathrm{Gal}(\bar{\Q}_q/\Q_q)$ is a ramified extension of $\mu$ by $\mu\chi_{cycl}$, where $\mu$ is the non-trivial unramified quadratic character of $\mathrm{Gal}(\bar{\Q}_q/\Q_q)$.
\end{enumerate}
Then every class in $\Sh(E/\Q)[p]$ is modular.
\end{corol}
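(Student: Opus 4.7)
The plan is to deduce the corollary from Theorem \ref{mainthm} by choosing a suitable imaginary quadratic field $K/\Q$, applying the theorem over $K$, and then descending the visibility statement back to $\Q$ using inflation-restriction together with the fact that $p$ is odd. The guiding principle is that a $p$-torsion class in $H^1(\Q, A)$ whose restriction to $H^1(K, A)$ vanishes must itself vanish, since the kernel of this restriction is annihilated by $[K:\Q]=2$.

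The first step is to choose $K=\Q(\sqrt{-d})$ so that the hypotheses of Theorem \ref{mainthm} hold. In the ordinary case, I would take $d$ positive and squarefree, coprime to $pN$, congruent to $3 \pmod 4$ if $2 \mid N$, and such that every prime $q \mid N$ is inert in $K$; then $N^+=1$ and $N^-=N$. In the supersingular case, let $q_0 \mid N$ be the prime guaranteed by hypothesis (3) of the corollary, and take $d$ as above but requiring in addition that $q_0$ splits in $K$ and every other $q \mid N$ is inert; then $N^+=q_0$ and $N^-=N/q_0$. In either situation, such $d$ exists by applying the Chinese remainder theorem and quadratic reciprocity to prescribe the residue of $d$ modulo each prime dividing $2pN$, and then invoking Dirichlet's theorem to find an actual value of $d$. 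By construction, hypothesis (1) of Theorem \ref{mainthm} is exactly the one in the corollary; hypothesis (2) is inherited from the corollary since primes in $N^-$ lie in $N$; and hypothesis (3) is either vacuous (ordinary case, $N^+=1$) or reduces to the supersingular hypothesis of the corollary applied to the single prime $q_0 \in N^+$.

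Applying Theorem \ref{mainthm} produces a $J_0$-modular abelian variety $A/\Q$ and an embedding $\iota: E \rightarrow A$ over $\Q$ such that every class in $\Sh(E/K)[p]$ is visible in $A\times_\Q K$ via $\iota_K := \iota \times_\Q K$. Given $c \in \Sh(E/\Q)[p]$, its restriction $c_K \in H^1(K, E)$ lies in $\Sh(E/K)[p]$, since local triviality is preserved under base change. By the theorem, $(\iota_K)_\ast(c_K)=0$ in $H^1(K, A)$, which is to say that the image $\iota_\ast(c) \in H^1(\Q, A)$ restricts to zero in $H^1(K, A)$. The inflation-restriction exact sequence
\begin{equation*}
0 \longrightarrow H^1(\mathrm{Gal}(K/\Q), A(K)) \longrightarrow H^1(\Q, A) \longrightarrow H^1(K, A)
\end{equation*}
then identifies $\iota_\ast(c)$ with an element of $H^1(\mathrm{Gal}(K/\Q), A(K))$, a group killed by $|\mathrm{Gal}(K/\Q)|=2$. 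Since $\iota_\ast(c)$ is also killed by the odd prime $p$, it must vanish; hence $c$ is visible in $A$ via $\iota$ and is therefore modular.

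The only substantive obstacle is the bookkeeping needed to select $K$: the hypotheses of the corollary and of the theorem are not identical, and in the supersingular case one must force the distinguished prime $q_0$ to land in $N^+$ (rather than $N^-$) while keeping every other prime of $N$ inert and every prime of $pN$ unramified in $K$. Once these congruence conditions are arranged consistently, the remaining descent step is purely formal, exploiting the standard fact that restriction along a quadratic extension is injective on odd-torsion cohomology.
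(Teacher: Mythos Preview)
Your argument is correct and follows the same overall strategy as the paper: choose an auxiliary imaginary quadratic field $K$ so that Theorem~\ref{mainthm} applies, then descend visibility from $K$ to $\Q$ using that restriction along a quadratic extension is injective on odd-torsion cohomology.

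The only notable difference is the choice of $K$. The paper makes a uniform choice: a prime $q\mid N$ is split in $K$ if and only if $\bar\rho$ is ramified at $q$, which forces $N^+$ to consist precisely of the primes where $\bar\rho$ is ramified and makes hypotheses (2) and (3) of Theorem~\ref{mainthm} transparent in both the ordinary and supersingular cases. You instead treat the two cases separately, taking $N^+=1$ when $E$ is ordinary and $N^+=q_0$ when $E$ is supersingular. Both choices are legitimate; the paper's is slightly cleaner in that it avoids a case distinction, while yours makes explicit exactly what is needed. Your inflation--restriction computation also spells out what the paper compresses into the single phrase ``the restriction map $\Sh(A/\Q)[p]\rightarrow \Sh(A/K)[p]$ is injective''.
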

\begin{proof}
Choose an imaginary quadratic field $K$ of discriminant coprime with $Np$, and such that $q \mid N$ is split (resp. inert) in $K$ if $\bar{\rho}$ is ramified (resp. unramified) at $q$. Then $E/K$ satisfies the assumptions of the previous theorem, so there is a $J_0$-modular abelian variety $A\supset E$ such that every class $c \in \Sh(E/K)[p]$ is visible in $A\times_\Q K$. As the restriction map $\Sh(A/\Q)[p]\rightarrow \Sh(A/K)[p]$ is injective, every class in $\Sh(E/\Q)[p]\subset\Sh(E/K)[p]$ is visible in $A$.
\end{proof}

\subsection{Visibility of the classes $z_\mathfrak{l}^{BD}\in H^1(K, E[p])$} Our aim is to prove Theorem \ref{mainthm}; the assumptions and notation of the theorem are in force from now on. We will work with Bertolini--Darmon's bipartite Kolyvagin system $\{z_\mathfrak{l}^{BD}, \mathfrak{l} \in \mathcal{A}\}$ for $E[p]$, introduced in \cref{bdks}. The key ingredient is the following proposition.
\begin{prop}\label{keyprop-vis}
For every $\mathfrak{l}\in \mathcal{A}^-$, the image of $z_\mathfrak{l}^{BD}\in H^1(K, E[p])$ in $H^1(K, E)$ is visible in a quotient of $J_0(N\mathfrak{l})$.
\end{prop}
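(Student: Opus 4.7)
The plan is to realize the class as visible in $A = (E \times A_{\mathfrak{l}})/E[p]$, where $A_{\mathfrak{l}}$ is the optimal quotient of $J_0(N\mathfrak{l})$ attached to $f_{\mathfrak{l}}$, and $E[p]$ is embedded anti-diagonally via the isomorphism $E[p] \simeq A_{\mathfrak{l}}[\mathfrak{p}_{\mathfrak{l}}]$ induced by $\bar{\rho}_{f_{\mathfrak{l}}} \simeq \bar{\rho}$ (using $\mathcal{O}_{\mathfrak{l}}/\mathfrak{p}_{\mathfrak{l}} \simeq \Z/p\Z$ from \cref{levrais}). This is the natural higher-level analogue of the simple visibility construction recalled in \cref{vis-cong}.

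First I would check that $A$ is a quotient of $J_0(N\mathfrak{l})$. Either degeneracy map $J_0(N\mathfrak{l}) \twoheadrightarrow J_0(N)$ composed with $J_0(N) \twoheadrightarrow E$ exhibits $E$ as a simple quotient of $J_0(N\mathfrak{l})$, while $A_{\mathfrak{l}}$ is by definition the $f_{\mathfrak{l}}$-new quotient. Since $f$ and $f_{\mathfrak{l}}$ are distinct newforms (of conductors $N \neq N\mathfrak{l}$), the associated simple factors of $J_0(N\mathfrak{l})$ are non-isogenous, so the product map $J_0(N\mathfrak{l}) \to E \times A_{\mathfrak{l}}$ is surjective; composing with $E \times A_{\mathfrak{l}} \twoheadrightarrow A$ presents $A$ as a quotient of $J_0(N\mathfrak{l})$.

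The main step is to realize $z_{\mathfrak{l}}^{BD}$ as $\delta(P)$ for some $P \in A_{\mathfrak{l}}(K)$, where $\delta$ is the connecting map for $0 \to A_{\mathfrak{l}}[\mathfrak{p}_{\mathfrak{l}}] \to A_{\mathfrak{l}} \xrightarrow{\mathfrak{p}_{\mathfrak{l}}} A_{\mathfrak{l}} \to 0$. By the construction recalled in \cref{constr-bd}(2), $z_{\mathfrak{l}}^{BD}$ is built from the CM divisor $\sum_{Q \in CM(\mathcal{O}_K)} Q$ on $\mathrm{Sh}_{N^+}(B_{\mathfrak{l}}^\times)$ via the quotient $A_{\mathfrak{l}}^{B_{\mathfrak{l}}}$ of the Shimura Jacobian cut out by $f_{\mathfrak{l}}$. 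Jacquet--Langlands supplies a $\Q$-isogeny $\phi: A_{\mathfrak{l}}^{B_{\mathfrak{l}}} \to A_{\mathfrak{l}}$; under the hypotheses of \cref{ass-bd}, the multiplicity-one-mod-$p$ results used to produce $Z^{BD}$ allow $\phi$ to be chosen of degree coprime to $p$, inducing a Galois-equivariant isomorphism on $\mathfrak{p}_{\mathfrak{l}}$-torsion compatible with the identifications with $E[p]$. Pushing the CM divisor forward through $\phi$ gives $P \in A_{\mathfrak{l}}(K)$, and a diagram chase comparing the two connecting maps shows that $\delta(P)$ equals $z_{\mathfrak{l}}^{BD}$ up to an element of $(\Z/p\Z)^\times$, which can be absorbed into the identification $A_{\mathfrak{l}}[\mathfrak{p}_{\mathfrak{l}}] \simeq E[p]$.

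Once $P$ is in hand, visibility is formal: the exact sequence $0 \to E[p] \to E \times A_{\mathfrak{l}} \to A \to 0$ gives a commutative square relating $\delta$ to the connecting map $H^1(K,E) \to H^1(K,A)$, and the $K$-point $P \in A_{\mathfrak{l}}(K) \hookrightarrow (E \times A_{\mathfrak{l}})(K) \to A(K)$ witnesses the vanishing of the image of $z_{\mathfrak{l}}^{BD}$ in $H^1(K,A)$. The delicate point is the Jacquet--Langlands transfer: one must know that the correspondence is realized by a $\Q$-isogeny of $p$-prime degree intertwining the two copies of $E[p]$, so that the class constructed on the Shimura side is faithfully read off the $J_0$-modular quotient. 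Everything else — the existence of $A$ as a quotient of $J_0(N\mathfrak{l})$, and the visibility conclusion itself — is an essentially formal consequence of the setup already in place in \cref{vis-cong}.
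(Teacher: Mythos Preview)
Your overall strategy matches the paper's: realize visibility in $(E \times A_\mathfrak{l})/E[p]$ for an abelian variety $A_\mathfrak{l}$ in the isogeny class attached to $f_\mathfrak{l}$, then observe this is a quotient of $J_0(N\mathfrak{l})$. The difference lies in where you place $A_\mathfrak{l}$ and how you transport the CM class to it, and this is where a genuine gap appears.

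The paper works entirely on the Shimura-curve side: $A_\mathfrak{l}$ is taken as a quotient of $J_\mathfrak{l}=\mathrm{Jac}(\mathrm{Sh}_{N^+}(B_\mathfrak{l}^\times))$ (chosen in its isogeny class so that the $\mathcal{O}_\mathfrak{l}$-action extends to $\bar{\mathcal{O}}_\mathfrak{l}$), and a non-zero map $J_\mathfrak{l}[p]/\mathfrak{m}_\mathfrak{l}\hookrightarrow A_\mathfrak{l}[\bar{\mathfrak{p}}_\mathfrak{l}]$ is produced directly by multiplying by $\alpha^{e-1}$ for $\alpha$ a uniformizer at $\bar{\mathfrak{p}}_\mathfrak{l}$ and projecting. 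Since $z_\mathfrak{l}^{BD}$ is \emph{defined} via $J_\mathfrak{l}$, this yields visibility with no appeal to Jacquet--Langlands beyond the soft fact that $A_\mathfrak{l}$, being isogenous to the $f_\mathfrak{l}$-quotient of $J_0(N\mathfrak{l})$, is itself a quotient of $J_0(N\mathfrak{l})$.

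Your route instead fixes $A_\mathfrak{l}$ on the $J_0$-side and pushes the CM point across a Jacquet--Langlands isogeny $\phi: A_\mathfrak{l}^{B_\mathfrak{l}}\to A_\mathfrak{l}$ of degree prime to $p$. You correctly flag this as the delicate step, but your assertion that multiplicity-one-mod-$p$ furnishes such a $\phi$ is unjustified: multiplicity one controls $J_\mathfrak{l}[p]/\mathfrak{m}_\mathfrak{l}$, not the comparison of integral $p$-adic structures between the Shimura-curve Jacobian and $J_0(N\mathfrak{l})$. Without this, your connecting-map argument does not start. There are also secondary issues you have glossed over: the sequence $A_\mathfrak{l}\xrightarrow{\mathfrak{p}_\mathfrak{l}} A_\mathfrak{l}$ presupposes $\mathfrak{p}_\mathfrak{l}$ is principal, and the identification $A_\mathfrak{l}[\mathfrak{p}_\mathfrak{l}]\simeq E[p]$ presupposes the optimal quotient has two-dimensional $\mathfrak{p}_\mathfrak{l}$-torsion, which need not hold when $\mathcal{O}_\mathfrak{l}$ is not maximal at $p$. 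The paper handles both by passing to $\bar{\mathcal{O}}_\mathfrak{l}$ and using only the image $\Phi\simeq E[p]$ inside $A_\mathfrak{l}[\bar{\mathfrak{p}}_\mathfrak{l}]$, never claiming equality. Staying on the Shimura side is precisely what makes all of these problems disappear.
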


\subsubsection{Construction of the classes $z_\mathfrak{l}^{BD}\in H^1(K, E[p])$}\label{const-zl} Fix $\mathfrak{l}\in \mathcal{A}^-$. Let us first describe precisely the construction of the class $z_\mathfrak{l}^{BD}\in H^1(K, E[p])$. As in \cref{levrais} we have a newform $f_{\mathfrak{l}}\in S_2(\Gamma_0(N\mathfrak{l}))$, a prime ideal $\mathfrak{p}_\mathfrak{l}$ of the ring $\mathcal{O}_{\mathfrak{l}}\subset \C$ generated by its Fourier coefficients, and a prime ideal $\bar{\mathfrak{p}}_\mathfrak{l}\subset \bar{\mathcal{O}}_\mathfrak{l}$ such that $\bar{\mathfrak{p}}_\mathfrak{l}\cap \mathcal{O}_\mathfrak{l}=\mathfrak{p}_\mathfrak{l}$. Let $\mathfrak{m}_\mathfrak{l}$ be the kernel of the $\mathcal{O}_\mathfrak{l}/\mathfrak{p}_\mathfrak{l}$-valued morphism of the Hecke algebra attached to $f_\mathfrak{l}$. Let $J_\mathfrak{l}$ be the Jacobian of the Shimura curve $\mathrm{Sh}_{N^+}(B^\times_\mathfrak{l})$ (compactified if $N^-\mathfrak{l}=1$) of level $\Gamma_0(N^+)$ attached to the quaternion algebra ramified at $N^-\mathfrak{l}$. By \cite[Proposition 4.4]{pw11} we have an isomorphism of $\mathrm{Gal}(\bar{\Q}/\Q)$-modules $J_\mathfrak{l}[p]/\mathfrak{m}_\mathfrak{l}\simeq E[p]$ (unique up to multiplication by a unit in $\Z/p\Z$). Furthermore, $J_\mathfrak{l}(K)/\mathfrak{m}_\mathfrak{l}\simeq \mathrm{Pic}(\mathrm{Sh}_{N^+}(B^\times_\mathfrak{l}))(K)/\mathfrak{m}_\mathfrak{l}$ since $\mathfrak{m}_\mathfrak{l}$ is non-Eisenstein, hence the $p$-descent exact sequence for $J_\mathfrak{l}(K)$ yields a map
\begin{equation*}
\mathrm{Pic}(\mathrm{Sh}_{N^+}(B^\times_\mathfrak{l}))(K)/\mathfrak{m}_\mathfrak{l}\rightarrow H^1(K, J_\mathfrak{l}[p]/\mathfrak{m}_\mathfrak{l})\simeq H^1(K, E[p]).
\end{equation*}
The class $z_\mathfrak{l}^{BD}$ is defined to be the image of the divisor $\sum_{Q \in CM(\mathcal{O}_K)}Q$.

\subsubsection{Proof of Proposition \ref{keyprop-vis}}\label{mainproof} If $f_\mathfrak{l}$ has rational Fourier coefficients, so that via the Eichler--Shimura construction one can attach to it an elliptic curve $E_\mathfrak{l}$ quotient of $J_\mathfrak{l}$, then the visibility of $z_\mathfrak{l}^{BD}$ in $(E \times E_\mathfrak{l})/E[p]$ follows from the argument in \cref{vis-cong}. The general case will be established via a variation of that argument. 

We will construct an abelian variety $A_\mathfrak{l}$ with an action of $\bar{\mathcal{O}}_\mathfrak{l}$, isogenous to the quotient of $J_\mathfrak{l}$ attached to (the Jacquet--Langlands transfer of) $f_\mathfrak{l}$. We will also prove the existence of a surjective map $J_\mathfrak{l} \rightarrow A_\mathfrak{l}$ giving rise to a non-zero map $J_\mathfrak{l}[p]\rightarrow A_\mathfrak{l}[\bar{\mathfrak{p}}_\mathfrak{l}]$ factoring through $J_\mathfrak{l}[p]/\mathfrak{m}_\mathfrak{l}$, and yielding a commutative diagram:
\begin{equation}\label{com-diag}
\begin{tikzcd}
H^1(K, J_\mathfrak{l}[p]) \arrow[r] \arrow[d]
& H^1(K, J_\mathfrak{l})[p] \arrow[dd] \\
H^1(K, J_\mathfrak{l}[p]/\mathfrak{m}_\mathfrak{l}) \arrow[d]
& \\
H^1(K, A_\mathfrak{l}[\bar{\mathfrak{p}}_\mathfrak{l}]) \arrow[r] & H^1(K, A_\mathfrak{l})[\bar{\mathfrak{p}_\mathfrak{l}}].
\end{tikzcd}
\end{equation}
Admitting the existence of such a diagram for the moment, let us conclude the proof of Proposition  \ref{keyprop-vis}. The map $J_\mathfrak{l}[p]/\mathfrak{m}_\mathfrak{l}\rightarrow A_\mathfrak{l}[\bar{\mathfrak{p}}_\mathfrak{l}]$ is injective (as it is non-zero and the source is an irreducible $\mathrm{Gal}(\bar{K}/K)$-module). Calling $\Phi$ the image, we have $\Phi \simeq E[p]$, and the class $z_\mathfrak{l}^{BD}$ is visible in $(E \times A_\mathfrak{l})/\Phi$ (as usual, the action of $\Phi$ is anti-diagonal). Indeed, the image of $\sum_{Q \in CM(\mathcal{O}_K)}Q$ in $H^1(K, J_\mathfrak{l}[p])$ goes to zero via the top horizontal map in \eqref{com-diag}, hence the image of $z_\mathfrak{l}^{BD}$ in $H^1(K,  A_\mathfrak{l}[\bar{\mathfrak{p}}_\mathfrak{l}])$ goes to zero in $H^1(K, A_\mathfrak{l})$. Visibility of $z_\mathfrak{l}^{BD}$ in $(E \times A_\mathfrak{l})/\Phi$ follows from the commutative diagram
\begin{center}
\begin{tikzcd}
\Phi \arrow[r] \arrow[d]
& A_\mathfrak{l} \arrow[d] \\
E \arrow[r]
& (E\times A_\mathfrak{l})/\Phi.
\end{tikzcd}
\end{center}
Finally, the abelian variety $A_\mathfrak{l}$ admits a surjective morphism from the $\mathfrak{l}$-new quotient of $J_0(N\mathfrak{l})$; as $E$ is a quotient of the $\mathfrak{l}$-old part of $J_0(N\mathfrak{l})$, we conclude that there is a surjection $J_0(N\mathfrak{l})\rightarrow (E\times A_\mathfrak{l})/\Phi$.

\subsubsection{Construction of the diagram \eqref{com-diag}}\label{heart-pf} Let $I_\mathfrak{l}$ be the kernel of the $\mathcal{O}_\mathfrak{l}$-valued morphism of the Hecke algebra attached to $f_\mathfrak{l}$. As in \cite[\S 3.7]{zha14}, we take $A_\mathfrak{l}$ to be an abelian variety in the isogeny class of $J_\mathfrak{l}/I_\mathfrak{l}$ with a surjection $\varphi: J_\mathfrak{l}\rightarrow A_\mathfrak{l}$ factoring through $I_\mathfrak{l}$, and such that the induced $\mathcal{O}_\mathfrak{l}$-action on $A_\mathfrak{l}$ extends to an action of $\bar{\mathcal{O}}_\mathfrak{l}$. Let $e$ be the maximum exponent of $\bar{\mathfrak{p}}_\mathfrak{l}$ dividing $p\bar{\mathcal{O}}_\mathfrak{l}$, and write $A_\mathfrak{l}[p]=A_\mathfrak{l}[\bar{\mathfrak{p}}_\mathfrak{l}^e]\times T$. Let $\alpha \in \bar{\mathcal{O}}_\mathfrak{l}$ be an element with $\bar{\mathfrak{p}}_\mathfrak{l}$-adic valuation 1, and with valuation 0 at the other $p$-adic places of $\bar{\mathcal{O}}_\mathfrak{l}$. The idea is that the bottom map in \eqref{com-diag} should arise from the long exact sequence coming from ``multiplication by $\bar{\mathfrak{p}}_\mathfrak{l}$''. Even though the ideal $\bar{\mathfrak{p}}_\mathfrak{l}$ might not be principal, the desired map can be constructed starting from the map $H^1(K, A_\mathfrak{l}[p])\rightarrow H^1(K, A_\mathfrak{l})[p]$, multiplying by $\alpha^{e-1}$ and projecting onto the $\bar{\mathfrak{p}}_\mathfrak{l}^e$-torsion part.

Let us make this idea precise. As in \cite[\S 3.7]{zha14}, we take $\varphi$ such that the image of the induced map between $p$-adic Tate modules is not contained in $\bar{\mathfrak{p}}_\mathfrak{l}T_p(A_\mathfrak{l})$. Then the image of the composite $J_\mathfrak{l}[p]\rightarrow A_\mathfrak{l}[p]\rightarrow A_\mathfrak{l}[\bar{\mathfrak{p}}_\mathfrak{l}^e]$ is not contained in $\bar{\mathfrak{p}}_\mathfrak{l}A_\mathfrak{l}[\bar{\mathfrak{p}}_\mathfrak{l}^e]$, so it is not killed by $\alpha^{e-1}$. We have the following commutative diagram, where the bottom vertical maps are induced by multiplication by $\alpha^{e-1}$ and projection onto the $\bar{\mathfrak{p}}_\mathfrak{l}^e$-torsion part:
\begin{center}
\begin{tikzcd}
H^1(K, J_\mathfrak{l}[p]) \arrow[r] \arrow[d, "\varphi"]
& H^1(K, J_\mathfrak{l})[p] \arrow[d, "\varphi"] \\
H^1(K, A_\mathfrak{l}[p]) \arrow[d, "\alpha^{e-1}"] \arrow[r]
&  H^1(K, A_\mathfrak{l})[p] \arrow[d, "\alpha^{e-1}"]\\
H^1(K, A_\mathfrak{l}[\bar{\mathfrak{p}}_\mathfrak{l}]) \arrow[r] & H^1(K, A_\mathfrak{l})[\bar{\mathfrak{p}}_\mathfrak{l}].
\end{tikzcd}
\end{center}
Let $\psi$ be the composite of the maps $J_\mathfrak{l}[p]\xrightarrow{\varphi}A_\mathfrak{l}[p]\xrightarrow{\alpha^{e-1}} A_\mathfrak{l}[\mathfrak{p}_\mathfrak{l}]$. Our choice of $\varphi$ ensures that $\psi$ is non-zero. Furthermore, it factors through $I_\mathfrak{l}$ because $\varphi$ does. The target is killed by $\bar{\mathfrak{p}}_\mathfrak{l}$, hence by  $\bar{\mathfrak{p}}_\mathfrak{l}\cap \mathcal{O}_\mathfrak{l}=\mathfrak{p}_\mathfrak{l}$. Therefore, the map $\psi$ factors through $\mathfrak{m}_\mathfrak{l}$, yielding \eqref{com-diag}.

\subsubsection{Proof of Theorem \ref{mainthm}} We may, and do, assume that $r=\dim(\mathrm{Sel}(K, E[p]))\geq 2$ (if $r=0$ there is nothing to prove; for $r=1$, Theorem \ref{bd-gensel} ensures that $\mathrm{Sel}(K, E[p])$ is generated by $z_1^{BD}$, which comes from a $K$-point, hence $\Sh(E/K)[p]=0$). Take $\mathfrak{l}=\ell_1\ell_2\cdots \ell_r \in \mathcal{A}$ as in Theorem \ref{bd-gensel}. For $1 \leq i \leq r$, let $\mathfrak{l}^{(i)}=\mathfrak{l}/\ell_i \in \mathcal{A}^-$, and let $A_{\mathfrak{l}^{(i)}}$ be the abelian variety constructed in \cref{heart-pf}. Let 
\begin{equation*}
A=(E\times A_{\mathfrak{l}^{(1)}}\times \cdots \times A_{\mathfrak{l}^{(r)}})/E[p]^r,
\end{equation*}
where the embedding $E[p]^r\rightarrow E\times A_{\mathfrak{l}^{(1)}}\times \cdots \times A_{\mathfrak{l}^{(r)}}$ sends $(x_1, \ldots, x_r)$ to $(-x_1-x_2- \cdots -x_r, x_1, x_2, \ldots, x_r)$. The map $E\rightarrow A$ sending $x$ to $(x, 1, \ldots, 1)$ is an embedding, and for every $1 \leq i \leq r$ we obtain a diagram
\begin{center}
\begin{tikzcd}
E[p] \arrow[r] \arrow[d] & A_{\mathfrak{l}^{(i)}} \arrow[d] &\\
 E \arrow[r] & (E \times A_{\mathfrak{l}^{(i)}})/E[p] \arrow[r] & A.
\end{tikzcd}
\end{center}
By the proof of Proposition \ref{keyprop-vis} we know that every $z_{\mathfrak{l}^{(i)}}^{BD} \in \mathrm{Sel}(K, E[p])$ yields a class in $\Sh(E/K)[p]$ which is visible in $(E \times A_{\mathfrak{l}^{(i)}})/E[p]$, hence in $A$. Therefore, all linear combinations of the images of the classes $z_{\mathfrak{l}^{(i)}}^{BD}$ in $\Sh(E/K)[p]$ are visible in $A$. Theorem \ref{bd-gensel} implies that every class in $\Sh(E/K)[p]$ is visible in $A$.

Finally, fix $1 \leq j \leq r$. The abelian variety $A_{\mathfrak{l}^{(j)}}$ (isogenous to the quotient of $J_0(N\mathfrak{l}^{(j)})$ attached to $f_{\mathfrak{l}^{(j)}}$) is the only one among the $A_{\mathfrak{l}^{(i)}}$ which is a quotient of the $\mathfrak{l}^{(j)}$-new part of $J_0(N\mathfrak{l})$. On the other hand, $E$ is a quotient of the $\mathfrak{l}$-old part of $J_0(N\mathfrak{l})$. Therefore, there is a surjection $J_0(N\mathfrak{l})\rightarrow A$.

\begin{rem}
From the proof of Theorem \ref{mainthm} we obtain the following additional information: if $r=\dim(\mathrm{Sel}(K, E[p]))$, then there is $\mathfrak{l}=\ell_1\cdots \ell_r\in \mathcal{A}$ such that $\Sh(E/K)[p]$ is visible in a quotient of $J_0(N\mathfrak{l})$. It would of course be more satisfying to be able to replace $r$ by (a number only depending on) $\dim(\Sh(E/K)[p])$, but we are unable do this as our argument does not distinguish $E(K)/pE(K)$ from $\Sh(E/K)[p]$ (cf. \cref{crit}).
\end{rem}

\subsection{Example}\label{concrete} Let us consider the elliptic curve (with Cremona label 307010b1)
\begin{equation*}
E: Y^2+XY+Y=X^3+511495021X+14412635791156
\end{equation*}
of conductor $N=2\cdot 5\cdot 11 \cdot 2791$. According to the LMFDB database \cite[\href{https://www.lmfdb.org/EllipticCurve/Q/307010/b/2}{Elliptic curve 307010.b2}]{lmfdb}, this is an optimal quotient of $J_0(N)$, with modular degree $m_E=435265488$, analytic rank $0$ and analytic order of $\Sh(E/\Q)$ equal to $169$ (note that the $p$-part of the Birch and Swinnerton-Dyer special value formula for $E/\Q$ holds for $p=13$ by \cite[Theorem 2]{su14}). As $m_E\not \equiv 0 \pmod {13}$, by the discussion in \cref{vis-modav} the classes of order 13 in $\Sh(E/\Q)$ are not visible in $J_0(N)$. On the other hand, for $p=13$ the assumptions of Corollary \ref{cor-jsconj} are satisfied: (1) follows from \cite[Theorem 4]{maz78}, (2) holds as no prime $q \mid N$ is congruent to $\pm 1 \pmod {13}$, and $(3)$ holds because $E$ is ordinary at $13$. Therefore $\Sh(E/\Q)[13]$ is visible in a quotient of $J_0(N\mathfrak{l})$ for some $1 \neq \mathfrak{l} \in \mathcal{A}$ (which we did not attempt to find explicitly).

\bibliographystyle{amsalpha}
\bibliography{shavis}

\end{document}